\newtheorem{theorem}{Theorem}[section]%
\newtheorem{proposition}[theorem]{Proposition}%
\newtheorem{lemma}[theorem]{Lemma}%
\newtheorem{corollary}[theorem]{Corollary}%
\newtheorem{definition}[theorem]{Definition}%
\DeclareMathOperator{\Str}{Str}
\DeclareMathOperator{\Th}{Th}
\DeclareMathOperator{\Var}{Var}
\renewcommand{\hat}{\widehat}
\newcommand{\Ind}{{\mathbbm{1}}}
\newcommand{\fkn}{{\mathfrak{n}}}
\def\st{\,:\,}
\def\M{{\EM{\mathcal{M}}}}
\def\cM{{\EM{\mathcal{M}}}}
\def\cD{{\EM{\mathcal{D}}}}
\def\N{{\EM{\mathcal{N}}}}
\def\cN{{\EM{\mathcal{N}}}}
\def\Pr{{\EM{\mathbb{P}}}}
\def\cA{{\EM{\mathcal{A}}}}
\def\cB{{\EM{\mathcal{B}}}}
\def\F{{\EM{\mc{F}}}}
\def\x{{\EM{\ol{x}}}}
\def\xx{{\EM{\ol{x}}}}
\def\a{{\EM{\ol{a}}}}
\def\m{{\EM{\textbf{m}}}}
\def\Lang{{\EM{L}}}
\def\Lomega#1{{\EM{\mc{L}_{#1, \w}}}}
\def\Lww{\Lomega{\w}}
\def\Lwow{\Lomega{\w_1}}
\newcommand{\NonRepeating}{\mathrm{NR}}
\newcommand{\UniformMeasure}[1]{\mu_{\mathrm{un},#1}}
\newcommand{\UniformMeasureLang}{\UniformMeasure{\Lang}}
\newcommand{\Sym}[1]{\mathrm{Sym}(#1)}
\newcommand{\Entropy}[1]{\mathrm{Ent}(#1)}
\newcommand{\EntropyFin}[1]{h(#1)}
\newcommand{\EntropyFinbig}[1]{h\bigl(#1\bigr)}
\newcommand{\EntropyFinBig}[1]{h\Bigl(#1\Bigr)}
\newcommand{\RandomEntropyFin}[1]{H(#1)}
\newcommand{\RandomEntropyFinbig}[1]{H\bigl(#1\bigr)}
\newcommand{\RandomEntropyFinBig}[1]{H\Bigl(#1\Bigr)}
\newcommand{\Wsqsq}{W_{\text{\larger[-2]$\blacksquare \square$}}}
\newcommand{\Wbsq}{W_{\text{\larger[-2]$\blacksquare$}}}
\newcommand{\Wwsq}{W_{\text{\larger[-2]$\square$}}}
\newcommand{\Thnr}{\mathrm{Th}_\mathrm{nr}}
\newcommand{\Thfun}{\mathrm{Th}_\mathrm{sel}}
\newcommand{\Equiv}{E}
\newcommand{\defn}[1]{{\bf{#1}}}
\newcommand{\defas}{{\EM{\ :=\ }}}
\newcommand{\llrr}[1]{{\llbracket #1 \rrbracket}}
\newcommand{\Extent}[1]{{\llrr{#1}}}
\newcommand{\Exp}{{\EM{\mathbb{E}}}}
\newcommand{\ProbMeas}{{\EM{\mathcal{P}}}}
\newcommand{\ER}{Erd\H{o}s--R\'enyi}
\newcommand{\Rado}{\mathcal{R}}
\def\w{\EM{\omega}}
\def\Reals{{\EM{{\mbb{R}}}}}
\def\Nats{{\EM{{\mbb{N}}}}}
\def\^{\EM{{}^{\And}}}
\def\And{\EM{\wedge}}
\def\<{\EM{\langle}}
\def\>{\EM{\rangle}}
\def\EM#1{\ensuremath{#1}}
\def\mbb#1{\EM{\mathbb{#1}}}
\def\mc#1{\EM{\mathcal{#1}}}
\def\ol#1{\EM{\overline{#1}}}
\newcommand{\qf}{\EM{\mathfrak{qf}^{\mathrm{nr}}}}
\newcommand{\arity}{\EM{\mathrm{arity}}}
\newcommand{\ArityLang}{\mathfrak{a}_{\Lang}}
\newcommand{\Alice}{A}
\newcommand{\Powerset}{\ensuremath{\mathfrak{P}}}
\def\given{\ |\ }
\def\givenbig{\ \big|\ }
\def\givenBig{\ \Big|\ }
\definecolor{MyGreen}{rgb}{.75,0,.75}
\definecolor{RealGreen}{rgb}{0,1,0}
\definecolor{DarkGreen}{rgb}{0.1,0.4,0.1}
\definecolor{ActualGreen}{rgb}{0.0,0.5,0.0}
\definecolor{MyBlue}{rgb}{0,0,1}
\definecolor{MyRed}{rgb}{1,0,0}
\definecolor{darkred}{rgb}{0.5,0,0}
\definecolor{darkgreen}{rgb}{0, 0.3,0}
\definecolor{darkblue}{rgb}{0,0,0.6}
\definecolor{LightGray}{rgb}{.6,.6,.6}
\begin{document}

%%%%%%%% FRONT MATTER START %%%%%%%%

\title{The entropy function of an invariant measure}

\author[Ackerman]{Nathanael Ackerman}
\address{
Department of Mathematics\\
Harvard University\\
Cambridge, MA 02138
}
\email{nate@math.harvard.edu}

\author[Freer]{Cameron Freer}
\address{
Borelian Corporation\\
Cambridge, MA 02139
}
\email{freer@borelian.com}

\author[Patel]{Rehana Patel}
\address{
Department of Philosophy\\
Harvard University\\
Cambridge, MA 02138
}
\email{rpatel@fas.harvard.edu}

\begin{abstract}
	Given a countable relational language $\Lang$, 
we consider probability measures on the space of $L$-structures with underlying set $\Nats$ 
	that are invariant under the logic action. We study the growth rate of the entropy function of such a measure, defined to be 
	the function sending $n \in \Nats$ to
	the entropy of the measure induced by restrictions to $L$-structures on $\{0, \ldots, n-1\}$.
	When $\Lang$ has finitely many relation symbols, all of arity $k\ge1$, and the measure has  a property called non-redundance, we show that the entropy function is of the form $Cn^k+o(n^k)$, generalizing a result of Aldous and Janson.  When $k\ge2$, we show that there are invariant measures whose entropy functions grow arbitrarily fast in $o(n^k)$, extending a result of Hatami--Norine.
	For possibly infinite languages $L$, we	give an explicit upper bound on the entropy functions of non-redundant invariant measures in terms of the number of relation symbols in $\Lang$ of each arity; this implies that finite-valued entropy functions can grow arbitrarily fast.  
\end{abstract}

\maketitle

%%%%%%%% FRONT MATTER END %%%%%%%%

%%%%%%%% TABLE OF CONTENTS %%%%%%%%
\setcounter{page}{1}
\thispagestyle{empty}

\begin{small}
\begin{tiny}
\renewcommand\contentsname{\!\!\!\!}
\setcounter{tocdepth}{3}
\tableofcontents
\end{tiny}
\end{small}

%%%%  %%%%  %%%%  %%%%  %%%%  %%%%  %%%%  %%%%  
\section{Introduction}
\label{intro-sec}
%%%%  %%%%  %%%%  %%%%  %%%%  %%%%  %%%%  %%%%  

The information-theoretic notion of entropy 
captures the idea of the typical ``uncertainty'' in 
a sample of a given probability measure. 
An important class of probability measures 
in model theory, combinatorics, and probability theory
are the $\Sym{\Nats}$-invariant measures on the space of structures, in a countable language, with underlying set $\Nats$. We call such measures \emph{invariant measures},
and consider the entropy of such measures.

When the support of a probability measure is uncountable, as is usually the case for invariant measures,
the entropy of the measure is infinite.
But any such measure $\mu$ can be approximated by its projections to the spaces of structures on initial segments of $\Nats$. 
We define the
\emph{entropy function} of $\mu$ to be the
function from $\Nats$ to $\Reals \cup \{\infty\}$ taking $n$ 
to the entropy of the measure 
induced by $\mu$
on structures with underlying set $\{0, \dots, n-1\}$.
In this paper, we study the growth of entropy functions.

The growth of the entropy function of an invariant measure
has been
studied by Aldous \cite[Chapter~15]{MR883646} in the setting of exchangeable arrays and
by Janson \cite[\S10 and \S D.2]{MR3043217}
and Hatami--Norine \cite[Theorem~1.1]{MR3073488}
in the case where the measure is concentrated on the space of graphs.
The leading coefficient of the entropy function has been
used to study large deviations for \ER\ random graphs and exponential random graphs in 
\cite{MR2825532} and
\cite{MR3127871}, and phase transitions in exponential random graphs in \cite{MR3083277}.
For additional results on the entropy functions of invariant measures concentrated on the space of graphs, see \cite{MR3742179}.

Further, ergodic invariant measures are the focus of a project that 
treats them model-theoretically as a notion of ``symmetric probabilistic structures''; see
\cite{MR3515800},
\cite{MR3425980}, 
\cite{MR3564374}, 
\cite{AFPcompleteclassification}, and
\cite{2017arXiv171009336A}. 
The entropy function of such a measure is one gauge of its complexity.

In this paper we primarily consider invariant measures, for a countable relational language, that
are \emph{non-redundant}, namely those that concentrate on structures in which a relation can hold of a tuple only when the tuple consists of distinct elements.
In \S\ref{qf-interdef-subsec} we develop machinery for quantifier-free interdefinitions that allows us to show, in
       Proposition~\ref{nonredundant-entropy-function}, that every entropy function of an invariant measure for a countable language is the entropy function of some non-redundant invariant measure for a countable relational language.

In Section~\ref{section-finite}, we study the case of invariant measures for countable relational languages
$\Lang$ with all relations having the same arity $k\ge 1$. 
The
Aldous--Hoover--Kallenberg theorem 
provides a 
representation, which we call an \emph{extended $\Lang$-hypergraphon}, of a non-redundant invariant measure for $\Lang$.
Our main technical result in this section,
Theorem~\ref{Entropy-from-hypergraphon},
shows that under a certain condition (which is satisfied, e.g., when $L$ is finite), the entropy function of the invariant measure corresponding to an extended $\Lang$-hypergraphon $W$
grows like $C n^k + o(n^k)$, where the constant $C$ can be calculated from $W$.
When the invariant measure is concentrated on the space of graphs, 
Theorem~\ref{Entropy-from-hypergraphon}
reduces to a result first observed by Aldous in \cite[Chapter~15]{MR883646}, and also by Janson in \cite[Theorem~D.5]{MR3043217}.
The proof of
Theorem~\ref{Entropy-from-hypergraphon}
follows closely
that of Janson.

In Section~\ref{randomfree-sec} we 
consider invariant measures that arise
via sampling from a Borel $k$-uniform hypergraph for $k\ge2$.
The entropy functions of such measures grow like $o(n^k)$,
	as we show in Lemma~\ref{random-free-has-leading-entropy-zero}.
We prove, in Theorem~\ref{Random-free has arbitrary growth rates},
that 
for every function $\gamma(n) \in o(n^k)$ 
there is an invariant measure sampled from
a Borel 
$k$-uniform hypergraph whose entropy function grows faster than $\gamma(n)$. Our 
theorem
generalizes a result of Hatami--Norine \cite[Theorem~1.1]{MR3073488},
who prove it in the case where $k  = 2$ (hence where the Borel hypergraph is simply a Borel graph). 
Hatami--Norine's proof analyzes the random graph obtained by subsampling from an appropriate ``blow up'' of 
the Rado graph, i.e., the unique countable homogeneous-universal
graph.
Our proof takes an analogous path, 
using the unique countable homogeneous-universal 
$k$-uniform hypergraph.

Finally, in Section~\ref{nonredundant-sec} we 
consider invariant measures for countable languages that may be of unbounded arity.
We provide
an upper bound on the entropy functions of 
non-redundant invariant measures for a given countable relational language,
in terms of the number of relations of each arity. 
We do so by calculating the entropy function of a particular non-redundant invariant measure whose entropy function is maximal among 
those
for that language.
This calculation also demonstrates that
whereas the growth of the entropy function of an invariant measure for a finite language is polynomial, there are $\Reals$-valued entropy functions
that grow arbitrarily fast.

%%%%  %%%%  %%%%  %%%%  %%%%  %%%%
\subsection{Preliminaries}
%%%%  %%%%  %%%%  %%%%  %%%%  %%%%  
\emph{In this subsection, 
let $\Lang$ be a countable language and
let $\fkn\in\Nats \cup \{\Nats\}$.}

For $n\in\Nats$, write $[n]\defas \{0, \ldots, n-1\}$, and $[\Nats] \defas \Nats$. 
Let $\Sym{\fkn}$ denote the collection of permutations of $[\fkn]$.

For $k \in \Nats$, 
define $\Powerset_{<k}(\fkn) \defas \{Y \subseteq [\fkn] \st |Y| < k\}$ and $\Powerset_{k}(\fkn) \defas$ $\{Y \subseteq [\fkn] \st |Y| = k\}$, and let
$\Powerset_{\leq k}(\fkn) \defas
\Powerset_{< k}(\fkn) \cup
\Powerset_{k}(\fkn)$.
We order each of these sets using \emph{shortlex} order, i.e., ordered by size, with sets of the same size ordered lexicographically.

We write $\Lww(\Lang)$ to denote the collection of first-order $\Lang$-formulas.
An \emph{$\Lang$-theory} is a collection of first-order $\Lang$-sentences.
A theory is $\Pi_1$ when every sentence is quantifier-free or of the form $(\forall \x)\varphi(\x)$ where $\x$ is a tuple of variables and $\varphi$ is quantifier-free.
The \emph{maximum arity} of $L$ is the maximum arity, when it exists, of a relation symbol or function symbol in $L$, where we consider constant symbols to be function symbols of arity $0$.

Fix a probability space $(\Omega, \F, \Pr)$.
Suppose $(D, \cD)$ is a measurable space. A \emph{$D$-valued random variable} $Z$, also called a \emph{random element in} $D$, is an $(\F,\cD)$-measurable function $Z \colon \Omega \to D$. 
The \emph{distribution} of $Z$ is the 
probability measure $\Pr\circ Z^{-1}$.
Given an event $B \in \cD$, we say that $B$ holds \emph{almost surely} when $\Pr(B) = 1$, and abbreviate this \emph{a.s.}  Typically $B$ will be specified indirectly via some property of random variables; for example, we say that  two random variables are equal a.s.\ when the subset of $\Omega$ on which they are equal has full measure.
For a topological space $S$, let $\ProbMeas(S)$ denote the space of Borel probability measures on $S$, with $\sigma$-algebra given by the weak topology.
We use the symbol $\bigwedge$ for conjunctions of probabilistic events, as well as for conjunctions of logical formulas.
	See \cite{MR1876169} for further background and notation from probability theory.

	We write $\lambda$ to denote the uniform (Lebesgue) measure on $[0, 1]$ and on finite powers of $[0,1]$.
Suppose a variable $x$ takes values in some finite power of $[0,1]$.
We say that an expression involving $x$ holds \emph{almost everywhere}, abbreviated \emph{a.e.}, when it holds of $x$ on all but a $\lambda$-null subset.

Let $\Str_\Lang(\fkn)$
denote the collection of $\Lang$-structures that have underlying set $[\fkn]$. 
We will consider $\Str_\Lang(\fkn)$ as a measure space with the $\sigma$-algebra generated by the topology given by basic clopen
sets of the form 
\[
\{\cM \in \Str_\Lang(\fkn) \st \cM \models \varphi(x_0, \dots, x_{\ell-1})\}
\]
when $\varphi\in \Lww(L)$ is a quantifier-free formula with $\ell$ free variables and $x_0, \dots, x_{\ell-1} \in [\fkn]$.

\begin{definition}
	Suppose $\varphi \in \Lww(\Lang)$ has $\ell$ free variables and $r_0, \dots, r_{\ell-1} \in [\fkn]$.
	Define the \defn{extent} (on $[\fkn]$) of $\varphi(r_0, \dots, r_{\ell-1})$ to be 
\[
	\Extent{\varphi(r_0, \dots, r_{\ell-1})}_{[\fkn]} \defas \{ \M \in \Str_\Lang(\fkn) \st \M \models \varphi(r_0, \dots, r_{\ell-1})\}.
\]
	When $\fkn=\Nats$ we will sometimes omit the subscript $[\fkn]$.
	For an $\Lang$-theory $T$, define $\Extent{T}_{[\fkn]} \defas \bigcap_{\rho\in T}\Extent{\rho}_{[\fkn]}$.
\end{definition}

There is a natural action of $\Sym{\fkn}$ on $\Str_\Lang(\fkn)$ called \defn{the logic action},
defined as follows:
	 for $\sigma\in\Sym{\fkn}$ and $\M\in\Str_\Lang(\fkn)$, let
	  $\sigma \cdot \M$ be the structure $\N\in\Str_\Lang(\fkn)$ for which
	    \[
			 R^\N(r_0, \ldots, r_{k-1})
			  \quad \text{if and only if} \quad
			   R^\M\bigl(\sigma^{-1}(r_0), \ldots,  \sigma^{-1}(r_{k-1}) \bigr)
			    \]
				 for all relation symbols $R\in L$ and $r_0, \ldots, r_{k-1} \in\fkn$, where $k$ is the arity of $R$, and similarly with constant and function symbols.
				  Note that the orbit 
				  under the logic action 
				  of any structure 
				  in $\Str_\Lang(\fkn)$
				  is its isomorphism class.
				  By Scott's isomorphism theorem,
				  every such orbit is Borel. 
				   For more details on the logic action, see \cite[\S16.C]{MR1321597}.

We say a probability measure $\mu$ on $\Str_{\Lang}(\fkn)$ is \defn{invariant} if it is invariant under the logic action of $\Sym{\fkn}$, i.e., if 
$\mu(B) = \mu(\sigma \cdot B)$
for every Borel $B \subseteq \Str_\Lang(\fkn)$ and every $\sigma \in\Sym{\fkn}$.
We call such a probability measure an \emph{invariant measure for $\Lang$}. 
An invariant measure $\mu$ is \defn{ergodic} if
	$\mu(X) = 0$ or $\mu(X) = 1$
	whenever $\mu(X\triangle\sigma(X)) = 0$ for all $\sigma \in\Sym{\Nats}$.
	Every ergodic invariant measure 
	on $\Str_\Lang(\fkn)$
	is an \emph{extreme point} in the simplex of invariant measures 
	on $\Str_\Lang(\fkn)$, and any invariant measure 
	on $\Str_\Lang(\fkn)$
	can be decomposed as a mixture of ergodic ones (see
	\cite[Lemma~A1.2 and Theorem~A1.3]{MR2161313}).

For $n\in\Nats$, any probability measure $\mu$ on $\Str_\Lang(\Nats)$ induces a probability measure $\mu_n$ on $\Str_\Lang(n)$ such that for any Borel set $B \subseteq  \Str_\Lang(n)$, we have 
\[
\mu_n(B) \defas \mu(\{\cM \in \Str_\Lang(\Nats) \st \cM|_{[n]} \in B\}),
\]
where $\cM|_{[n]}\in\Str_\Lang(n)$ denotes the restriction of $\cM$ to $[n]$.
Further, by the Kolmogorov consistency theorem, $\mu$ is uniquely 
determined by the collection $\<\mu_n\>_{n \in \Nats}$.

Towards defining the entropy function of an invariant measure, we give the standard definition of the entropy of a probability measure. For this definition, we use the convention that $-0 \log_2(0) = 0$. 

\begin{definition}
\label{entropy-definition-finite-set}
	Let $\nu$ be a probability measure on a standard Borel space $S$, and let 
	$A\defas	\{s\in S \st \nu(\{s\}) > 0\}$ be its (countable) set of atoms.
	If $\nu$ is purely atomic, i.e., $\nu(A) = 1$,
	then the \defn{entropy} of $\nu$ is given by
\[
\EntropyFin{\nu}\defas -\sum_{x \in A} 
\nu(\{x\})  \log_2(\nu(\{x\})). 
\]
Otherwise, let
\[
\EntropyFin{\nu}\defas \infty.
\]
For any random variable $X$ with distribution $\nu$, define $\EntropyFin{X} \defas \EntropyFin{\nu}$.
\end{definition}

\begin{definition}
	The \defn{joint entropy} of a pair of random variables $X$ and $Y$, written
	$h(X,Y)$, is defined to be the entropy of the joint distribution of $(X,Y)$. 
	Similarly, 
	$h(\<X_i\>_{i\in I})$ 
	is defined to be the entropy of the joint distribution of the sequence $\<X_i\>_{i\in I}$.
\end{definition}

\begin{definition}
	Given random variables $X$ and $Y$, the \defn{conditional entropy} of $X$ given $Y$, written
	$h(X\in \cdot\ |\,Y)$, is defined to be the function
	\[
	y \mapsto 
		h(\Pr(X\in\cdot\ |\,Y = y)).\]
\end{definition}

We have defined the entropy of a random variable to be the entropy of its distribution. When the random variable takes values in a space of measures, instead of considering the entropy of the random variable directly, we sometimes need the random variable, defined below, that is obtained by taking the entropies of these measures themselves.

\begin{definition}
		Suppose $\chi$ is a measure-valued random variable.
	The \defn{random entropy} of $\chi$, written
	$H(\chi)$, is a random variable defined by
	\[
		H(\chi)(\varpi) = h(\chi(\varpi)).
		\]
		for $\varpi\in\Omega$.
\end{definition}

This notion allows us to define random conditional entropy.

\begin{definition}
	Given random variables $X$ and $Y$, the \defn{random conditional entropy} of $X$ given $Y$, written
	$H(X\,|\,Y)$, is defined by
	\[ H(X\,|\,Y) = H(\Exp(X\,|\,Y)),
		\]
		i.e., the random entropy of the random measure $\Exp(X\,|\,Y)$, the conditional expectation of $X$ given $Y$.
\end{definition}

The following three lemmas are standard facts about entropy, which we will need later.

\begin{lemma}[\textnormal{see {\cite[Theorem~2.2.1]{MR2239987}}}]
\label{chainrule}
For random variables $X$ and $Y$, we have
\[
	\EntropyFin{X, Y} = \EntropyFin{X} + \Exp(\RandomEntropyFin{Y \given X}).
\]
\end{lemma}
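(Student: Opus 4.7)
The plan is to verify the chain-rule identity by splitting on whether the joint distribution of $(X,Y)$ is purely atomic. In the purely atomic case the classical information-theoretic chain rule applies, and in the remaining case a short argument forces both sides to equal $\infty$.

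First, I would suppose that the joint distribution of $(X,Y)$ is purely atomic, supported on a countable set. Both the marginal distribution of $X$ and the conditional distributions $\Pr(Y \in \cdot \mid X = x)$, for $x$ in the atomic support of $X$, are then purely atomic. Writing $p(x,y)$, $p(x)$, and $p(y \mid x)$ for the joint, marginal, and conditional mass functions, the factorization $p(x,y) = p(x)\, p(y \mid x)$ yields, after applying $-\log_2$ and summing against $p(x,y)$,
\[
-\sum_{x,y} p(x,y) \log_2 p(x,y) = -\sum_{x} p(x) \log_2 p(x) - \sum_{x} p(x) \sum_{y} p(y \mid x) \log_2 p(y \mid x),
\]
which rearranges to $\EntropyFin{X} + \Exp(\RandomEntropyFin{Y \mid X})$. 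This is the standard discrete chain rule stated in \cite[Theorem~2.2.1]{MR2239987}.

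Next I would handle the case where the joint distribution of $(X,Y)$ is not purely atomic, so that $\EntropyFin{X,Y} = \infty$ by Definition~\ref{entropy-definition-finite-set}; I must show the right-hand side is also $\infty$. If the marginal of $X$ is not purely atomic, then $\EntropyFin{X} = \infty$, and since $\Exp(\RandomEntropyFin{Y \mid X}) \geq 0$ we are done. Otherwise, the marginal of $X$ is purely atomic, supported on some countable set $\{x_i\}$, and the joint distribution equals $\sum_{i} \Pr(X = x_i)\, \delta_{x_i} \otimes \nu_i$, where $\nu_i \defas \Pr(Y \in \cdot \mid X = x_i)$. A countable convex combination of purely atomic probability measures is itself purely atomic, so the fact that the joint fails to be purely atomic forces some $\nu_{i_0}$ with $\Pr(X = x_{i_0}) > 0$ to be non-purely-atomic. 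Then $\EntropyFin{\nu_{i_0}} = \infty$, so $\RandomEntropyFin{Y \mid X}$ takes the value $\infty$ on the positive-probability event $\{X = x_{i_0}\}$, which forces $\Exp(\RandomEntropyFin{Y \mid X}) = \infty$.

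The main obstacle is the second case rather than the classical sum manipulation: one must reconcile the convention that non-purely-atomic measures have infinite entropy with the random-measure formulation of conditional entropy, and rule out the possibility that a purely atomic $X$ could ``hide'' a continuous conditional distribution for $Y$. The observation that countable mixtures of purely atomic probability measures remain purely atomic resolves this, after which the cited Cover--Thomas result disposes of the combinatorial content of the first case.
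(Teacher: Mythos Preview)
The paper does not supply a proof of this lemma; it is stated as one of three ``standard facts about entropy'' and simply cites \cite[Theorem~2.2.1]{MR2239987}. Your argument is correct and in fact does a little more than the cited reference: Cover--Thomas works in the discrete setting only, whereas you also verify that the identity remains valid (with both sides equal to $\infty$) under the paper's convention in Definition~\ref{entropy-definition-finite-set} that a non--purely-atomic distribution has infinite entropy. The key observation that a countable convex combination of purely atomic probability measures is purely atomic is exactly what is needed to rule out the problematic sub-case, and your nonnegative-term rearrangement in the atomic case is valid in $[0,\infty]$ without further justification.
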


\begin{lemma}[\textnormal{see {\cite[Theorem~2.6.5]{MR2239987}}}]
\label{cond-reduces-entropy}
For random variables $X$ and $Y$, we have
		\[ \EntropyFin{X} \ge \Exp(\RandomEntropyFin{X \given Y}),\]
	with equality if and only if $X$ and $Y$ are independent.
\end{lemma}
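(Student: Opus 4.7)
The plan is to prove this standard \emph{conditioning reduces entropy} inequality by applying Jensen's inequality to the concave entropy functional. The key observation is that the marginal distribution $\nu_X \defas \Pr(X \in \cdot)$ is the $\Pr_Y$-average of the conditional distributions $\nu_y \defas \Pr(X \in \cdot \given Y = y)$; that is, $\nu_X = \int \nu_y\, d\Pr_Y(y)$. The inequality is then equivalent to concavity of the entropy functional $\nu \mapsto \EntropyFin{\nu}$ on $\ProbMeas(S)$, where $S$ is the standard Borel space in which $X$ takes values.

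First I would verify that $\nu\mapsto\EntropyFin{\nu}$ is concave on $\ProbMeas(S)$. On the cone of purely atomic measures this follows from strict concavity of the real function $p\mapsto -p\log_2 p$ on $[0,1]$, together with Fubini/Tonelli to exchange the sum over atoms with the averaging over $y$. On measures with a non-atomic component, $\EntropyFin{\nu}=\infty$ by Definition~\ref{entropy-definition-finite-set}, so concavity holds trivially there (the inequality becomes an equality $\infty = \infty$ or the right-hand side is smaller). Applying Jensen to the representation $\nu_X = \int \nu_y\, d\Pr_Y(y)$ then yields
\[
\EntropyFin{X} \;=\; \EntropyFinBig{\int \nu_y\, d\Pr_Y(y)} \;\geq\; \int \EntropyFin{\nu_y}\, d\Pr_Y(y) \;=\; \Exp\bigl(\RandomEntropyFin{X\given Y}\bigr),
\]
which is the stated inequality.

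The main subtlety lies in the equality case. Strict concavity of $p\mapsto -p\log_2 p$ on $[0,1]$ upgrades Jensen to an equality characterization: assuming both sides are finite, equality forces $\nu_y(\{x\}) = \nu_X(\{x\})$ for $\Pr_Y$-a.e.\ $y$ and every atom $x$ of $\nu_X$. Since $\nu_X$ has only countably many atoms, a single $\Pr_Y$-null set handles them all simultaneously, yielding $\nu_y = \nu_X$ a.s., i.e., $X$ and $Y$ are independent; the converse is immediate from $\nu_y = \nu_X$ a.s. The case $\EntropyFin{X} = \infty$ with equality can be bootstrapped from the finite case by approximating $X$ via its projections onto finite measurable partitions of $S$ and applying the discrete Cover--Thomas statement to each, but the finite-entropy setting is all that is required for the applications in the sequel.
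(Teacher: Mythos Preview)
The paper does not prove this lemma; it merely cites \cite[Theorem~2.6.5]{MR2239987} and moves on. Your argument via concavity of the entropy functional and Jensen's inequality is exactly the standard proof (and is essentially what Cover--Thomas do in the discrete setting), so there is nothing to compare.

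One genuine gap, however: your claim that the equality case for $\EntropyFin{X}=\infty$ ``can be bootstrapped'' by passing to finite partitions is false as stated. Take $X$ with values in $\Nats$ and infinite entropy, and let $Y$ be any nontrivial coarsening of $X$ (e.g.\ $Y=X\bmod 2$) chosen so that the conditional distributions $\Pr(X\in\cdot\mid Y=y)$ still have infinite entropy; then both sides equal $\infty$ while $X$ and $Y$ are dependent. So the ``equality iff independent'' clause simply fails when $\EntropyFin{X}=\infty$, and no approximation argument will repair it. This does not affect anything downstream: the paper invokes Lemma~\ref{cond-reduces-entropy} only for the inequality (in Theorems~\ref{Entropy-from-hypergraphon} and~\ref{Random-free has arbitrary growth rates}), never for the equality characterization, and in those applications the relevant entropies are finite anyway.
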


\begin{lemma}[\textnormal{see {\cite[Theorem~2.6.6]{MR2239987}}}]
\label{joint-entropy-less-than-sum-entropy}
Let $\<X_i\>_{i \in I}$ be a sequence of random variables. Then 
\[
\EntropyFin{\<X_i\>_{i \in I}} \leq \sum_{i \in I} \EntropyFin{X_i},
\]
with equality if and only if
the $X_i$ are independent.
\end{lemma}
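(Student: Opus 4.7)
The plan is to derive the inequality by iterating the chain rule (Lemma~\ref{chainrule}) and bounding each resulting conditional-entropy term via the fact that conditioning does not increase entropy (Lemma~\ref{cond-reduces-entropy}). First I would settle the finite case $I = \{0, 1, \ldots, n-1\}$ by induction on $n$, with the $n=1$ case immediate. The inductive step uses Lemma~\ref{chainrule} to write
\[
h(X_0, \ldots, X_{n-1}) = h(X_0, \ldots, X_{n-2}) + \Exp\bigl(H(X_{n-1} \,\big|\, X_0, \ldots, X_{n-2})\bigr),
\]
bounds the first summand by the inductive hypothesis, and bounds the second by $h(X_{n-1})$ via Lemma~\ref{cond-reduces-entropy}, yielding $h(\<X_i\>_{i<n}) \le \sum_{i<n} h(X_i)$.

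For the equality clause in the finite case, one direction is immediate: if the $X_i$ are mutually independent, then the conditional distribution of $X_i$ given $(X_0, \ldots, X_{i-1})$ agrees almost surely with the unconditional distribution of $X_i$, so every inequality in the induction is in fact an equality. For the converse, if the chain of inequalities collapses, the equality clause of Lemma~\ref{cond-reduces-entropy} forces $X_{n-1}$ to be independent of $(X_0, \ldots, X_{n-2})$; the inductive hypothesis then yields mutual independence of $X_0, \ldots, X_{n-2}$, and combining these two facts gives full mutual independence of the whole sequence.

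For a possibly infinite index set $I$, if any $h(X_i) = \infty$ the bound is trivial; otherwise each marginal is purely atomic, and I would invoke monotonicity of $h$ under projection together with the standard fact that the entropy of a purely atomic joint distribution equals the supremum of entropies of its finite-dimensional marginals. This lets one pass from the finite-case inequality $h(\<X_i\>_{i \in J}) \le \sum_{i \in J} h(X_i)$ to the full bound by taking $\sup$ over finite $J \subseteq I$. The main subtlety is precisely this passage to infinite $I$ under the paper's atom-sum definition of $h$ (in particular, handling the case where the joint fails to be purely atomic while the marginals are); the finite inequality itself, which is what is used elsewhere in the paper, is a direct two-line consequence of the two preceding lemmas.
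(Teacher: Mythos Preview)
The paper does not give its own proof of this lemma; it is stated as one of three ``standard facts about entropy'' and attributed to \cite[Theorem~2.6.6]{MR2239987} without further argument. Your proposed proof is essentially the standard textbook derivation (chain rule plus ``conditioning reduces entropy''), and is exactly how Cover--Thomas prove it in the finite case, so there is nothing to compare.

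One remark on your infinite-$I$ discussion: the passage to the supremum over finite $J$ is correct when the joint is purely atomic, but the scenario you flag---marginals purely atomic with finite entropy yet the joint not purely atomic---genuinely requires care under the paper's Definition~\ref{entropy-definition-finite-set}, since then $\EntropyFin{\<X_i\>_{i\in I}} = \infty$ while $\sum_i \EntropyFin{X_i}$ could in principle be finite. In the paper's actual applications (the proof of Theorem~\ref{Entropy-from-hypergraphon} uses a finite index set $\Powerset_k(n)$, and Proposition~\ref{nonred-bound} uses it in a context where either the joint is supported on finitely many structures or the right-hand side is infinite), this edge case does not arise, so your caveat that ``the finite inequality is what is used elsewhere'' is well placed.
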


We now define the entropy function of an invariant measure on $\Str_\Lang(\Nats)$.

\begin{definition}
Let $\mu$ be an invariant measure on $\Str_\Lang(\Nats)$. The \defn{entropy function} of $\mu$ is defined to be the function $\Entropy{\mu}$ from $\Nats$ to $\Reals \cup \{\infty\}$ given by 
\[
\Entropy{\mu}(n) \defas \EntropyFin{\mu_n}.
\]
\end{definition}

The following basic property of the entropy function is immediate.

\begin{lemma}
Let $\mu$ be an invariant measure on $\Str_\Lang(\Nats)$. Then $\Entropy{\mu}$ is a non-decreasing function.
\end{lemma}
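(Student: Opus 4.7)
The plan is to show $\Entropy{\mu}(n) \le \Entropy{\mu}(n+1)$ for every $n \in \Nats$; the general monotonicity statement then follows by induction on the gap. My approach is to exhibit $X_n \defas \cM|_{[n]}$ (for $\cM$ distributed according to $\mu$) as a deterministic Borel function of $X_{n+1} \defas \cM|_{[n+1]}$, and then invoke the standard fact that a function of a random variable has at most the entropy of that random variable.

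Concretely, I first dispatch the case of infinite entropy. If $\mu_{n+1}$ is not purely atomic, then by Definition~\ref{entropy-definition-finite-set} we already have $\Entropy{\mu}(n+1) = \infty$, and there is nothing to prove. Otherwise $\mu_{n+1}$ is purely atomic, and since $\mu_n$ is its pushforward under the Borel restriction map $\Str_\Lang(n+1) \to \Str_\Lang(n)$, the measure $\mu_n$ is purely atomic as well. I then apply the chain rule (Lemma~\ref{chainrule}) to the pair $(X_n, X_{n+1})$ in both orders:
\[
\EntropyFin{X_{n+1}} + \Exp\bigl(\RandomEntropyFin{X_n \given X_{n+1}}\bigr) = \EntropyFin{X_n, X_{n+1}} = \EntropyFin{X_n} + \Exp\bigl(\RandomEntropyFin{X_{n+1} \given X_n}\bigr).
\]
Since $X_n$ is a fixed deterministic function of $X_{n+1}$, the conditional distribution $\Pr(X_n \in \cdot \given X_{n+1})$ is almost surely a Dirac measure, whose entropy is zero, so the second summand on the left vanishes. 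The conditional-expectation term on the right is nonnegative, so $\EntropyFin{X_n} \le \EntropyFin{X_{n+1}}$, which is exactly $\Entropy{\mu}(n) \le \Entropy{\mu}(n+1)$.

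I do not expect any real obstacle: the chain rule is already recorded as Lemma~\ref{chainrule}, Definition~\ref{entropy-definition-finite-set} takes care of the infinite case directly, and the only other ingredient is the easy observation that the conditional law of a deterministic function of $Y$ given $Y$ is almost surely a point mass.
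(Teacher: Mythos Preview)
Your argument is correct; the paper itself gives no proof, stating only that the lemma is ``immediate.'' Your use of the chain rule (Lemma~\ref{chainrule}) together with the observation that $X_n$ is a Borel function of $X_{n+1}$ is a standard and clean way to supply the details. Note also that you never invoke the invariance of $\mu$, which is appropriate: the monotonicity of $n \mapsto \EntropyFin{\mu_n}$ holds for \emph{any} probability measure on $\Str_\Lang(\Nats)$, not just invariant ones.
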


In this paper we will mainly be interested in invariant measures whose entropy functions take values in $\Reals$. 
The following lemma is immediate because
in a finite language,
there are only finitely many structures of each finite size.

\begin{lemma}
	\label{realvalued-lemma}
Suppose $\Lang$ is finite, and let $\mu$ be an invariant measure on $\Str_\Lang(\Nats)$. Then 
$\Entropy{\mu}$ 
	is $\Reals$-valued. 
\end{lemma}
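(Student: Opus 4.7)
The plan is straightforward: when $L$ is finite, $\Str_L(n)$ is itself a finite set for each $n$, so $\mu_n$ is automatically purely atomic with entropy bounded by $\log_2 |\Str_L(n)|$. There is no need to invoke any invariance properties of $\mu$.

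First I would verify finiteness of $\Str_L(n)$ by enumerating the symbols of $L$. Since $L$ has finitely many symbols, each of finite arity, an $L$-structure on $[n]$ is specified by finitely many choices of interpretation: each $r$-ary relation symbol contributes one of $2^{n^r}$ subsets of $[n]^r$, each $r$-ary function symbol contributes one of $n^{n^r}$ maps $[n]^r \to [n]$, and each constant symbol contributes one of $n$ elements of $[n]$. The product of these finitely many finite cardinalities is finite, so $|\Str_L(n)| < \infty$.

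Second, since $\mu_n$ is a probability measure on the finite set $\Str_L(n)$, every point in $\Str_L(n)$ is (at worst) an atom of $\mu_n$, and the atoms have total measure $1$. So by Definition~\ref{entropy-definition-finite-set},
\[
\EntropyFin{\mu_n} = -\sum_{\cM \in \Str_L(n)} \mu_n(\{\cM\}) \log_2 \mu_n(\{\cM\}),
\]
which is a finite sum. The elementary bound $\EntropyFin{\mu_n} \le \log_2 |\Str_L(n)|$, attained by the uniform distribution on $\Str_L(n)$, then yields $\Entropy{\mu}(n) \in \Reals$ for every $n \in \Nats$.

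There is no serious obstacle here; the statement is a direct consequence of finiteness of the ambient space $\Str_L(n)$. The only mild care is in remembering that function and constant symbols, in addition to relation symbols, must be counted when checking that a finite language yields finitely many structures on $[n]$.
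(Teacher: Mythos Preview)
Your proposal is correct and matches the paper's approach exactly: the paper does not give a formal proof, merely stating that the lemma ``is immediate because in a finite language, there are only finitely many structures of each finite size.'' You have simply spelled out that observation in detail, including the bound $\EntropyFin{\mu_n} \le \log_2 |\Str_L(n)|$ and the care with function and constant symbols.
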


In Section~\ref{nonredundant-sec} we will strengthen this lemma by showing that there is a polynomial upper bound of $O(n^k)$ 
on $\Entropy{\mu}(n)$, where $k$ is the maximum arity of $\Lang$.

The notion of \emph{non-redundance} 
for an invariant measure, which we introduce next, will be key throughout this paper.

\begin{definition}
	\label{Thnr-def}
Suppose $\Lang$ is relational.
For each $R \in \Lang$ 
	define $\vartheta_R$ to be the formula
\[
(\forall x_0, \dots, x_{k-1}) \Big(R(x_0, \dots, x_{k-1}) \rightarrow \bigwedge_{i < j < k} (x_i \neq x_j)\Big),
\]
where $k$ is the arity of $R$.
Define the $\Pi_1$ $\Lang$-theory
	\[
		\Thnr(\Lang) \defas \{\vartheta_R \st R \in\Lang\}.\]

An $\Lang$-structure $\M$ is \defn{non-redundant} when $\M \models \Thnr(\Lang)$. An invariant measure $\mu$ on $\Str_\Lang(\Nats)$ is \defn{non-redundant} when 
\[
\mu(\Extent{\Thnr(\Lang)}) = 1.
\]
\end{definition}

For example, any $k$-uniform hypergraph is non-redundant, as is any directed graph without self-loops.

The following straightforward lemma provides conditions under which the entropy function takes values in $\Reals$.

\begin{lemma}
	\label{non-redundant-makes-it-real-valued}
Suppose $\Lang$ 
has finitely many relations of any given arity, and let $\mu$ be a non-redundant invariant measure on $\Str_{\Lang}(\Nats)$. Then for every $n \in \Nats$ there is a finite set $A_n \subseteq \Str_{\Lang}(n)$ such that $\mu_n$ concentrates on $A_n$. 
In particular, 
	$\Entropy{\mu}(n)\in\Reals$,
	and so
	$\Entropy{\mu}$ is $\Reals$-valued.
\end{lemma}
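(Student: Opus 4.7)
The plan is to define $A_n$ to be the set of all non-redundant $L$-structures on $[n]$ and show this set is finite; then since $\mu$ concentrates on $\Extent{\Thnr(L)}$, the restriction $\mu_n$ will concentrate on $A_n$, and finiteness of $A_n$ forces $\EntropyFin{\mu_n} \le \log_2 |A_n| < \infty$.

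The key observation is that non-redundance strictly limits how a relation of arity $k$ can be interpreted on the finite set $[n]$. Concretely, let $r_k$ denote the (finite, by hypothesis) number of relation symbols in $L$ of arity $k$. In any non-redundant structure on $[n]$, a relation symbol $R$ of arity $k$ must be interpreted as a subset of the tuples of $k$ \emph{distinct} elements from $[n]$. If $k > n$, there are no such tuples, so $R^{\M}$ must be empty; if $k \le n$, there are exactly $n!/(n-k)!$ such tuples, giving $2^{n!/(n-k)!}$ possible interpretations of $R$. Hence
\[
|A_n| \;\le\; \prod_{k=1}^{n} \bigl(2^{n!/(n-k)!}\bigr)^{r_k},
\]
which is a finite product of finite quantities, so $A_n$ is finite.

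Now for $\mu_n$: the map $\M \mapsto \M|_{[n]}$ sends non-redundant structures on $\Nats$ to non-redundant structures on $[n]$, so $\mu_n(A_n) \ge \mu(\Extent{\Thnr(L)}) = 1$. In particular $\mu_n$ is purely atomic and supported on the finite set $A_n$, so by Definition~\ref{entropy-definition-finite-set},
\[
\Entropy{\mu}(n) \;=\; \EntropyFin{\mu_n} \;=\; -\sum_{\M \in A_n} \mu_n(\{\M\}) \log_2 \mu_n(\{\M\}) \;\le\; \log_2 |A_n| \;<\; \infty,
\]
using the standard fact that entropy on a finite support of size $m$ is bounded by $\log_2 m$. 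This gives $\Entropy{\mu}(n) \in \Reals$ for every $n$, as claimed. No real obstacle arises: the argument is entirely bookkeeping, with the only point requiring attention being that although $L$ itself may be infinite, on any given $[n]$ only the (finitely many) relation symbols of arity at most $n$ contribute nontrivially once non-redundance is imposed.
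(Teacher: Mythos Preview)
Your proof is correct and is exactly the straightforward argument the paper has in mind; indeed the paper states this lemma without proof, calling it ``straightforward.'' Your explicit count $\log_2|A_n| = \sum_{k\le n} r_k \cdot n!/(n-k)!$ is precisely the bound the paper later records as $\Entropy{\UniformMeasureLang}(n)$ in Lemma~\ref{nonred-binomial} and Corollary~\ref{strongLemma}, so your approach anticipates that sharpening as well.
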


In Section~\ref{nonredundant-sec} we will consider the case of non-redundant invariant measures for a relational language, and will strengthen this lemma by providing an explicit upper bound on $\Entropy{\mu}(n)$.

%%%%%%%%%%%%%%%% %%%%%%%%%%%%%%%% %%%%%%%%%%%%%%%% %%%%%%%%%%%%%%%%

%%%%%%%%%%%%%%%% %%%%%%%%%%%%%%%% %%%%%%%%%%%%%%%% %%%%%%%%%%%%%%%%
\subsection{Quantifier-free interdefinitions}
\label{qf-interdef-subsec}
%%%%%%%%%%%%%%%% %%%%%%%%%%%%%%%% %%%%%%%%%%%%%%%% %%%%%%%%%%%%%%%%

The 
notion of \emph{quantifier-free interdefinability}, which we define below,
is a variant of
the standard
notion of interdefinability from the
setting of $\aleph_0$-categorical theories (see, e.g.,
{\cite[\S1]{MR831437}}).
It provides a method for translating invariant measures concentrated on the extent of a given $\Pi_1$ theory 
to invariant measures
concentrated on the extent of a target $\Pi_1$ theory,
in a way that preserves the entropy function. 
We use this machinery to show, in Proposition~\ref{nonredundant-entropy-function}, that 
every entropy function of an invariant measure already occurs as the entropy function of
a non-redundant invariant measure for a relational language.

\emph{Throughout this subsection, $L_0$ and $L_1$ will be countable languages, sometimes with further restrictions.}

\begin{definition}
\label{interdefinability}
	Suppose $T_0$ is an $L_0$-theory and $T_1$ is an $L_1$-theory. A \defn{quantifier-free interdefinition} between 
$T_0$ and $T_1$
is a pair 
$\Psi=(\Psi_0, \Psi_1)$
of maps
\begin{eqnarray*}
	\Psi_0 &\colon &  \Lww(L_0) \to \Lww(L_1) \qquad \text{and}\\
	\Psi_1 & \colon & \Lww(L_1) \to \Lww(L_0)
\end{eqnarray*}
such that for $j \in \{0, 1\}$, the formula 
$\Psi_j(\eta)$ is quantifier-free whenever $\eta\in\Lww(L_{1-j})$ is quantifier-free, and
further,
\begin{eqnarray*}
T_{1-j}
& \vdash & \ \Psi_{j} \circ \Psi_{1-j} (\rho) \, \leftrightarrow \, \rho,\\
T_{1-j}
	& \vdash & \ \Psi_{j} (x=y) \, \leftrightarrow \, (x=y),\\
T_{1-j} & \vdash & \  \neg \Psi_j(\chi) \, \leftrightarrow \, \Psi_j(\neg \chi), \\
T_{1-j}
& \vdash & \  \Psi_j(\chi \And \varphi) \, \leftrightarrow \,
	\bigl(\Psi_j(\chi) \And \Psi_j(\varphi)\bigr), \qquad \text{and} \\
T_{1-j}
& \vdash& \  (\exists x) \Psi_j(\psi(x)) \, \leftrightarrow \,
\Psi_j\bigl((\exists x) \psi(x)\bigr)
\end{eqnarray*}
	for all  $\rho\in\Lww(L_{1-j})$ and
$\chi,\,\varphi,\,
\psi(x) 
\in \Lww(L_j)$,
and such that the free variables of $\Psi_j(\upsilon)$ are the same as those of $\upsilon$ for every $\upsilon\in \Lww(L_j)$.

	For $\fkn\in\Nats\cup\{\Nats\}$,
	the interdefinition $\Psi$ induces maps
	$\Psi^*_{j,\fkn}\colon \Extent{T_j}_{[\fkn]} \to \Extent{T_{1-j}}_{[\fkn]}$ for $j\in\{0,1\}$ satisfying,
	for any structure $\N\models T_j$, tuple $\m\in \fkn$, and formula $\varphi\in\Lww(L_{1-j})$,
	\[
		\Psi^*_{j,\fkn}(\N) \models \varphi(\m)
		\qquad \text{if and only if}\qquad
		\N \models \Psi_{1-j}(\varphi)(\m).
		\]
	It is immediate that each $\Psi^*_{j,\fkn}$ is a bijection.
\end{definition}
%%%%%%%%%% %%%%%%%%%% %%%%%%%%%% %%%%%%%%%%

	Quantifier-free interdefinitions between $\Pi_1$ theories preserve entropy functions, as we now show.

%%%%%%%%%%%
\begin{lemma}
\label{Interdefinitions-preserve-entropy}
	Let $\Psi = (\Psi_0, \Psi_1)$ 
	be a quantifier-free interdefinition between a $\Pi_1$ $L_0$-theory 
	$T_0$ and a $\Pi_1$ $L_1$-theory $T_1$.
	Suppose $\mu$ is an invariant measure on $\Str_{\Lang_0}(\Nats)$ concentrated on $\Extent{T_0}$ and let $\nu$ be the pushforward of $\mu$ along $\Psi^*_{0,\Nats}$.
	Then 
	$\nu$ is an invariant measure on $\Str_{\Lang_1}(\Nats)$ concentrated on $\Extent{T_1}$, and $\Entropy{\nu} = \Entropy{\mu}$.
\end{lemma}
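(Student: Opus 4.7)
The plan is to verify the three claims---measurability with concentration on $\Extent{T_1}$, $\Sym{\Nats}$-invariance, and equality of entropy functions---in turn, with essentially all the content going into the last.

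\textbf{Measurability, concentration, invariance.} The Borel $\sigma$-algebra on $\Str_{L_1}(\Nats)$ is generated by the sets $\Extent{\varphi(\m)}$ for quantifier-free $\varphi\in\Lww(L_1)$ and tuples $\m$ from $\Nats$. By the defining equivalence of $\Psi^*_{0,\Nats}$ and the hypothesis that $\Psi_1$ sends quantifier-free formulas to quantifier-free formulas, the preimage of each such set equals $\Extent{\Psi_1(\varphi)(\m)}\cap\Extent{T_0}$, which is Borel; the symmetric argument for $\Psi^*_{1,\Nats}$ exhibits $\Psi^*_{0,\Nats}$ as a Borel isomorphism between $\Extent{T_0}$ and $\Extent{T_1}$. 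Thus $\nu$ is a Borel probability measure concentrated on $\Extent{T_1}$. For $\Sym{\Nats}$-invariance it suffices to check that $\Psi^*_{0,\Nats}$ is $\Sym{\Nats}$-equivariant: for each atomic $\varphi\in\Lww(L_1)$, tuple $\m$, and $\sigma\in\Sym{\Nats}$, chaining the defining equivalence of $\Psi^*_{0,\Nats}$ with the definition of the logic action shows that $\Psi^*_{0,\Nats}(\sigma\cdot\N)$ and $\sigma\cdot\Psi^*_{0,\Nats}(\N)$ satisfy the same atomic formulas on $\Nats$ and hence coincide. Invariance of $\nu$ then follows from invariance of $\mu$.

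\textbf{Entropy equality.} The decisive step is that $\Psi^*$ commutes with restriction: for every $\N\in\Extent{T_0}$ and $n\in\Nats$,
\[
\Psi^*_{0,\Nats}(\N)\big|_{[n]}\ =\ \Psi^*_{0,n}(\N|_{[n]}).
\]
This holds because for any quantifier-free $\varphi\in\Lww(L_1)$ with free variables from a tuple $\m\subseteq[n]$, truth of $\Psi_1(\varphi)(\m)$---being quantifier-free---depends only on the restriction $\N|_{[n]}$; and because $T_0$ is $\Pi_1$, that restriction itself lies in $\Extent{T_0}_{[n]}$, so $\Psi^*_{0,n}$ is applicable. It follows that $\nu_n=(\Psi^*_{0,n})_\ast \mu_n$, with $\Psi^*_{0,n}$ a Borel bijection from $\Extent{T_0}_{[n]}$ onto $\Extent{T_1}_{[n]}$. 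Since a Borel bijection merely relabels the atoms of a probability measure and preserves the presence or absence of a continuous part, $\EntropyFin{\nu_n}=\EntropyFin{\mu_n}$ for all $n$, proving $\Entropy{\nu}=\Entropy{\mu}$.

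The main obstacle is the restriction-commutation identity: it is where the quantifier-free hypothesis on $\Psi$ and the $\Pi_1$-ness of $T_0$ jointly intervene---the former ensuring the truth of $\Psi_1(\varphi)(\m)$ is determined by any substructure containing $\m$, and the latter ensuring that the restricted substructures are in the domain where $\Psi^*_{0,n}$ is defined. Once it is in hand, the entropy equality is automatic from the bijection-invariance of discrete entropy.
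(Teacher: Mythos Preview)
Your proof is correct and follows essentially the same route as the paper. Both arguments hinge on the fact that $\Psi^*$ commutes with restriction to $[n]$---which is exactly where the quantifier-free hypothesis on $\Psi$ and the $\Pi_1$ hypothesis on $T_0$ are used---and then conclude by observing that a bijection preserves entropy. The only organizational difference is that you isolate the commutation identity $\Psi^*_{0,\Nats}(\N)|_{[n]}=\Psi^*_{0,n}(\N|_{[n]})$ explicitly and deduce $\nu_n=(\Psi^*_{0,n})_*\mu_n$ as measures, whereas the paper defines $\nu^*_n\defas(\Psi^*_{0,[n]})_*\mu_n$ and then verifies $\nu_n(\cB)=\nu^*_n(\cB)$ atom by atom; your formulation is slightly cleaner but the content is the same.
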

%%%%%%%%%%%

%%%%%%%%%%%
\begin{proof}
	It is immediate that $\nu$ is an invariant measure on $\Str_{\Lang_1}(\Nats)$ concentrated on $\Extent{T_1}$,
	by the definition of pushforward and the fact that  
$\Psi^*_{0,\Nats}$ is a bijection between $\Extent{T_0}$ and $\Extent{T_1}$.

	We will next show,
	for each $n\in\Nats$,
	that there is a measure-preserving bijection between the atoms 
	of $\mu_n$ and the atoms of $\nu_n$. 
By Definition~\ref{entropy-definition-finite-set},
	this will establish that $h(\mu_n) = h(\nu_n)$ for each $n\in\Nats$, and so
	$\Entropy{\mu} = \Entropy{\nu}$.

	Let $n\in\Nats$.
	If $\M\in\Extent{T_0}_\Nats$ then	
	$\M|_{[n]} \in \Extent{T_0}_{[n]}$, 
	because $\mu$ concentrates on $\Extent{T_0}_\Nats$ and $T_0$ is $\Pi_1$.
Hence the measure 
	$\mu_n$ concentrates on $\Extent{T_0}_{[n]}$.
	Similarly, $\nu_n$ concentrates on $\Extent{T_1}_{[n]}$.
	Therefore every atom of $\mu_n$ is in $\Extent{T_0}_{[n]}$
and every atom of $\nu_n$ is in $\Extent{T_1}_{[n]}$.

	Write $\nu^*_n$ for the pushforward of $\mu_n$ along $\Psi^*_{0,[n]}$. 
	It is clear that $\Psi^*_{0, [n]}$ is a measure-preserving bijection between the 
	atoms of $\mu_n$ and the atoms of $\nu^*_n$. 
	Let $\cB$ be an atom of $\nu_n$; 
	we have $\cB \in \Extent{T_1}_{[n]}$ because $\nu_n(\cB) > 0$.
	We will show that $\nu_n(\cB) = \nu^*_n(\cB)$.

	Let $\cA = \Psi_{1, [n]}^*(\cB)$.  
	For $\N\in\Extent{T_1}_\Nats$, 
		by Definition~\ref{interdefinability}
	we have
	$\N|_{[n]} = \cB$ if and only if $\Psi_{1, \Nats}^*(\N)|_{[n]} = \cA$.
		By this fact, the definitions of $\mu_n$, $\nu_n$, and pushforward, 
		the surjectivity of 
	$\Psi^*_{1, \Nats}$, and the fact that $\mu$ and $\nu$ are concentrated on
	$\Extent{T_0}_\Nats$ and $\Extent{T_1}_\Nats$ respectively,
		we have 
	\begin{eqnarray*}
\nu_n(\cB)
		& = & \nu(\{\N \in \Extent{T_1}_\Nats \st \N|_{[n]} = \cB\})\\
		& = & \nu(\{\N \in \Extent{T_1}_\Nats \st \Psi_{1, \Nats}^*(\N)|_{[n]} = \cA\})\\
		& = & \mu(\{\M  \in \Extent{T_0}_\Nats \st \M|_{[n]} = \cA\})\\
		& = & \mu_n(\cA).
	\end{eqnarray*}
	But $\mu_n(\cA) = \nu_n^*(\cB)$ by the definition of pushforward.
	Hence $\nu_n(\cB) = \nu_n^*(\cB)$, as desired.
\end{proof}

Non-redundant invariant measures are defined only for relational languages.
Hence towards showing,
in Proposition~\ref{nonredundant-entropy-function},
that every entropy function 
is the entropy function of a non-redundant invariant measure, we need a quantifier-free interdefinition 
between the empty theory
in an arbitrary 
language and a particular $\Pi_1$ theory in a relational language.
There is a standard such
interdefinition that
maps every function symbol 
to a relation symbol
representing the graph of the function,
but it entails an increase in arity, as each function symbol of arity $\ell$ is mapped to a relation symbol of arity $\ell+1$. 

While this standard interdefinition would suffice for our purposes 
in Proposition~\ref{nonredundant-entropy-function}, 
here we provide a more parsimonious interdefinition, to highlight a connection with 
what is known about invariant measures for languages containing function symbols.
Namely,
using results from \cite{AFPcompleteclassification}, 
in Lemmas 
\ref{AFP-consequence} 
and
\ref{atomic definition with functions} 
we
show how to
avoid increasing the arity of symbols,
by providing a quantifier-free interdefinition that
replaces each function symbol of arity $\ell$ with 
finitely many relation symbols, each of arity~$\ell$.

As we will see in Lemma~\ref{AFP-consequence}, 
every invariant measure is concentrated on structures in which
every function is a ``selector'', sometimes called a
``choice function'', i.e., a function for which the output is always one of the inputs.
For example, the only unary selector is the identity function. We consider constant symbols to be $0$-ary function symbols; observe that no constant is a selector.

%%%%%%%%%%%%%%%%%%%%%%%%%%%%%%%%
\begin{definition}
	\label{Thfun-def}
	Let $f \in \Lang_0$ be a function symbol, and let $\ell$ be the arity of $f$. Define $\theta_f$ to be the sentence
\[
	(\forall x_0, \dots, x_{\ell-1})
\Big( \bigvee_{i \in [\ell]} f(x_0, \dots, x_{\ell-1}) = x_i \Bigl ),
\]
asserting that $f$ is a \emph{selector},
and define the $\Pi_1$ $\Lang_0$-theory 
	\[
	\Thfun(\Lang_0) \defas
		\{\theta_f \st f \in\Lang_0\text{ is a function symbol}\} .
		\]
\end{definition}
%%%%%%%%%%%%%%%%%%%%%%%%%%%%%%%%

%%%%
\begin{lemma}
\label{AFP-consequence}
If $\mu$ is an invariant measure on $\Str_{\Lang_0}(\Nats)$ then $\mu(\Extent{\Thfun(\Lang_0)}) = 1$. 
\end{lemma}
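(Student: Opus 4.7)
The plan is to argue directly from $\Sym{\Nats}$-invariance, symbol by symbol and tuple by tuple, rather than invoking the machinery of \cite{AFPcompleteclassification}. Since $\Thfun(\Lang_0)$ is a countable conjunction of universal statements, and each universal statement ranges over countably many tuples, it suffices to show, for each function symbol $f\in\Lang_0$ of arity $\ell$ and each $\ell$-tuple $\a=(a_0,\dots,a_{\ell-1})\in\Nats^\ell$, that $f^\M(\a)\in\{a_0,\dots,a_{\ell-1}\}$ for $\mu$-almost every $\M$. A countable union of null sets will then give $\mu(\Extent{\Thfun(\Lang_0)})=1$.

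First I would fix $f$, $\ell\geq 1$, and $\a$, and for each $b\in\Nats$ define the Borel event
\[
E_{\a,b}\defas\{\M\in\Str_{\Lang_0}(\Nats)\st f^\M(a_0,\dots,a_{\ell-1})=b\}.
\]
From the logic action (as defined in the excerpt, with the analogous clause for function symbols), for any permutation $\sigma\in\Sym{\Nats}$ that fixes each $a_i$, one checks that $\sigma\cdot E_{\a,b}=E_{\a,\sigma(b)}$. Invariance of $\mu$ then gives $\mu(E_{\a,b})=\mu(E_{\a,\sigma(b)})$. For $b\notin\{a_0,\dots,a_{\ell-1}\}$, the orbit of $b$ under such $\sigma$ is the infinite set $\Nats\setminus\{a_0,\dots,a_{\ell-1}\}$, so $\mu(E_{\a,b})$ is constant over this infinite set. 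Since the events $\{E_{\a,b}\}_{b\in\Nats}$ are pairwise disjoint and have total measure at most $1$, the common value must be $0$. This proves $f^\M(\a)\in\{a_0,\dots,a_{\ell-1}\}$ almost surely, which is $\theta_f$ evaluated at $\a$.

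The main subtlety is the $\ell=0$ case, i.e., constant symbols: $\theta_c$ is an empty disjunction, hence $\bot$, so $\Extent{\theta_c}=\emptyset$, and the claim demands $\mu=0$. One handles this by observing that the argument above applies with no $a_i$ present: for $c\in\Lang_0$ a constant, the events $\{E_b\}_{b\in\Nats}$, where $E_b=\{\M\st c^\M=b\}$, partition $\Str_{\Lang_0}(\Nats)$, and the same invariance argument forces $\mu(E_b)$ to be the same value for every $b\in\Nats$. Summing to $1$ over an infinite index set is impossible, so no invariant probability measure exists on $\Str_{\Lang_0}(\Nats)$ in this case, and the lemma holds vacuously. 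With the arity-$0$ case disposed of and the arity-$\geq 1$ case handled by the orbit argument, intersecting over the countably many pairs $(f,\a)$ completes the proof. The only delicate step is the constant case, since one must realize that its apparent failure is in fact a vacuous truth coming from the nonexistence of invariant measures on a signature containing constants.
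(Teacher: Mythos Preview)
Your argument is correct, and it takes a genuinely different route from the paper. The paper's proof passes through the ergodic decomposition: it first reduces to ergodic invariant measures $\nu$, then invokes two results from \cite{AFPcompleteclassification} --- that $\Th(\nu)$ is a complete deductively-closed theory, and that this theory has trivial definable closure --- to conclude that $\theta_f\in\Th(\nu)$ for every function symbol $f$. Your proof is strictly more elementary: you work directly with the given (possibly non-ergodic) $\mu$, and the only tool you use is that $\Sym{\Nats}$-invariance forces the events $E_{\a,b}$ for $b\notin\{a_0,\dots,a_{\ell-1}\}$ to be equiprobable, hence null. This avoids both the ergodic decomposition and the black-box citations. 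The paper's approach has the advantage of highlighting the model-theoretic content (selectors are exactly what survives when definable closure is trivial), while your approach shows the result is really a one-line consequence of invariance and countable additivity. Your treatment of the constant case is also correct and worth stating explicitly, since the paper's proof absorbs it into the definable-closure contradiction without comment.
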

\begin{proof}
	Let $\nu$ be an ergodic invariant measure on $\Str_{\Lang_0}(\Nats)$.
By \cite[Lemma~2.4]{AFPcompleteclassification},
	\[
		\Th(\nu)\defas \{ \varphi \in \Lww(L) \st \nu(\Extent{\varphi}) = 1\},
		\]
		is a complete deductively-closed first-order theory. 
For a function symbol $f\in \Lang_0$, 
	if $\neg \theta_f \in \Th(\nu)$ then $\Th(\nu)$ has non-trivial definable closure, contradicting
	\cite[Proposition~6.1]{AFPcompleteclassification}, and so we must have $\theta_f \in \Th(\nu)$. 

		Therefore $\nu(\Extent{\Thfun(\Lang_0)}) = 1$.
	Because $\mu$ is a mixture of ergodic invariant measures $\nu$, we also have
	$\mu(\Extent{\Thfun(\Lang_0)}) = 1$.
\end{proof}
%%%%%

We now provide the desired quantifier-free interdefinition.

%%%%%%%%%%%%%%%%%%%%%%%%%%%%%%%%
\begin{lemma}
\label{atomic definition with functions}
	Suppose $\Lang_1$ is the relational language 
consisting of
\begin{itemize}
\item each relation symbol in $\Lang_0$, along with

\item 
a relation symbol $E_{f, i}$
	for each function symbol $f \in \Lang_0$ and $i\in [\ell]$, where $\ell$ is the arity of $f$.
\end{itemize}

Let $T$ be the $\Pi_1$ $\Lang_1$-theory consisting of
the sentences
\[
(\forall x_0, \dots, x_{\ell-1}) \bigvee_{i \in [\ell]}E_{f, i}(x_0, \dots, x_{\ell-1}),
\]
and 
\[
(\forall x_0, \dots, x_{\ell-1}) \bigwedge_{i < j \in [\ell]}\neg \bigl(E_{f, i}(x_0, \dots, x_{\ell-1}) \And E_{f, j}(x_0, \dots, x_{\ell-1})\bigr),
\]
for each function symbol $f \in \Lang_0$, 
where $\ell$ is the arity of $f$.

	Then there is a quantifier-free interdefinition $(\Psi_0,\Psi_1)$
	between $\Thfun(\Lang_0)$ and $T$.
\end{lemma}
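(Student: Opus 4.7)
The plan is to build $\Psi_1$ and $\Psi_0$ by first specifying them on atomic formulas in a way that preserves quantifier-freeness and free variables, and then extending to all of $\Lww$ by requiring commutation with negation, conjunction, and existential quantification. The remaining clauses in Definition~\ref{interdefinability} then reduce, via Boolean and quantifier manipulation, to checking identities at the atomic level.

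For $\Psi_1$ I leave the $\Lang_0$-relation symbols and equality fixed, and for each new relation symbol I set
\[
\Psi_1\bigl(E_{f,i}(x_0,\dots,x_{\ell-1})\bigr) \defas \bigl(f(x_0,\dots,x_{\ell-1}) = x_i\bigr) \wedge \bigwedge_{j<i}\bigl(f(x_0,\dots,x_{\ell-1}) \neq x_j\bigr),
\]
so that, in a model of $\Thfun(\Lang_0)$, the formula singles out the unique least index at which $f$ selects. This least-index convention is needed because the axioms of $T$ require that exactly one $E_{f,i}$ hold for each tuple, whereas in a model of $\Thfun(\Lang_0)$ there may be several indices $i$ with $f(a_0,\dots,a_{\ell-1}) = a_i$ when the components of the tuple coincide; taking the least makes the induced map $\Psi^*_{0,\fkn}$ land in $\Extent{T}$.

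The construction of $\Psi_0$ is more delicate, because atomic $\Lang_0$-formulas may contain nested function symbols while $\Lang_1$ has none. The key tool, enabled by $\Thfun(\Lang_0)$, is that every $\Lang_0$-term evaluates to one of its variables. So for each $\Lang_0$-term $t$ with free variables among $(x_0,\dots,x_{n-1})$ and each $i\in[n]$, I build by recursion on term depth a quantifier-free $\Lang_1$-formula $\sigma_{t,i}(x_0,\dots,x_{n-1})$ intended to capture ``$t=x_i$'': the base case is $\sigma_{x_j,i}\defas(x_j=x_i)$, and for $t=f(t_0,\dots,t_{\ell-1})$ I set
\[
\sigma_{t,i}\defas\bigvee_{(\alpha_0,\dots,\alpha_{\ell-1})\in [n]^\ell}\Biggl(\bigwedge_{k<\ell}\sigma_{t_k,\alpha_k}\wedge\bigvee_{q<\ell}\Bigl(E_{f,q}(x_{\alpha_0},\dots,x_{\alpha_{\ell-1}})\wedge x_{\alpha_q}=x_i\Bigr)\Biggr).
\]
I then define $\Psi_0$ on atomic formulas by
\[
\Psi_0\bigl(R(t_0,\dots,t_{k-1})\bigr)\defas\bigvee_{(\alpha_0,\dots,\alpha_{k-1})\in [n]^k}\biggl(\bigwedge_{m<k}\sigma_{t_m,\alpha_m}\wedge R(x_{\alpha_0},\dots,x_{\alpha_{k-1}})\biggr)
\]
and $\Psi_0(t_0=t_1)\defas\bigvee_{\alpha\in[n]}\bigl(\sigma_{t_0,\alpha}\wedge\sigma_{t_1,\alpha}\bigr)$; both are quantifier-free $\Lang_1$-formulas by induction and preserve free variables.

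The verification is a routine structural induction once the atomic case is settled. The main technical step is to show, by induction on the complexity of $t$ using $\theta_f$ and the two axioms of $T$, that $\sigma_{t,i}$ really does capture ``$t=x_i$'' in the sense that for every $\cN\models T$ and tuple $\bar{a}$, $\cN\models\sigma_{t,i}(\bar{a})$ iff the $\Lang_0$-structure $\Psi^*_{1,\fkn}(\cN)$ satisfies $t(\bar{a})=a_i$. Granted this, both identities $T\vdash\Psi_0\circ\Psi_1(\rho)\leftrightarrow\rho$ (where the least-index convention matches the partition axiom of $T$) and $\Thfun(\Lang_0)\vdash\Psi_1\circ\Psi_0(\rho)\leftrightarrow\rho$ reduce to the atomic case. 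A trivial edge case is when $\Lang_0$ contains a constant symbol, in which case $\Thfun(\Lang_0)\vdash\bot$ and every condition of Definition~\ref{interdefinability} holds vacuously.
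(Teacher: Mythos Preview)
Your argument is correct and, in fact, more careful than the paper's own treatment. The paper merely sets $\Psi_1\bigl(E_{f,i}(x_0,\dots,x_{\ell-1})\bigr)\defas\bigl(f(x_0,\dots,x_{\ell-1})=x_i\bigr)$ and then writes ``It is easy to define the analogous map $\Psi_0$ and check that $(\Psi_0,\Psi_1)$ is a quantifier-free interdefinition.'' You correctly observe that this bare translation does not make $\Psi^*_{0,\fkn}(\cN)$ land in $\Extent{T}$ when the input tuple has repeated entries (since then several $E_{f,i}$ would hold simultaneously, violating the partition axiom); your least-index correction is exactly the right repair. You also supply an explicit term-unnesting recursion $\sigma_{t,i}$ to define $\Psi_0$ on atomic $\Lang_0$-formulas with nested function symbols, which the paper omits altogether; this is the natural construction, and your inductive justification that $\sigma_{t,i}$ captures ``$t=x_i$'' in models of $T$ is the key step.

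Two small points worth tightening. First, as written, your big disjunctions over $[n]^\ell$ and $[n]^k$ can introduce free variables $x_j$ not occurring in the original atomic formula, contrary to the free-variable clause of Definition~\ref{interdefinability}; the fix is to let the index set range only over the actual free variables of the formula at hand rather than an ambient $[n]$. Second, in the constant-symbol edge case you note that $\Thfun(\Lang_0)\vdash\bot$, but the conditions of Definition~\ref{interdefinability} also involve $T\vdash\cdots$; here $T$ is likewise inconsistent (the ``at least one'' axiom for a $0$-ary $f$ is the empty disjunction), so both halves are vacuous.
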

\begin{proof}
	For each relation symbol $R\in\Lang_0$, let $\Psi_1(R(x_0, \dots, x_{k-1}))$ be the formula $R(x_0, \dots, x_{k-1})$, where $k$ is the arity of $R$. For each function symbol $f\in \Lang_0$ and $i \in [\ell]$, let $\Psi_1(E_{f, i}(x_0, \dots, x_{\ell-1}))$ be the formula $f(x_0, \dots, x_{\ell-1}) = x_i$, where $\ell$ is the arity of $f$. It is easy to define the analogous map $\Psi_0$ and check that $(\Psi_0, \Psi_1)$ is a quantifier-free interdefinition
	between $\Thfun(\Lang_0)$ and $T$.
\end{proof}
%%%%%%%%%%%%%%%%%%%%%%%%%%%%%%%%

%%%%%

Next we provide a quantifier-free interdefinition which, combined
with the previous results, 
will allow us to prove
	Proposition~\ref{nonredundant-entropy-function}.

\begin{lemma}
\label{Reduction to non-redundant}
	Suppose $\Lang_0$ is relational. Then there is a countable relational language $\Lang_1$
	and a quantifier-free interdefinition $(\Psi_0, \Psi_1)$ between
	the empty $\Lang_0$-theory and the $\Pi_1$ $\Lang_1$-theory $\Thnr(\Lang_1)$.
\end{lemma}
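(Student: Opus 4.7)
The plan is to build $L_1$ by replacing each relation symbol of $L_0$ with a family of symbols indexed by the possible equality patterns of its input tuple, and to translate atomic formulas by an explicit case analysis on those patterns. Specifically, let $L_1$ contain, for each $R\in L_0$ of arity $k$ and each equivalence relation $E$ on $[k]$ with $m_E$ classes, a fresh relation symbol $R_E$ of arity $m_E$. For each such $E$, fix the class projection $\sigma_E\colon[k]\to[m_E]$ (labelling classes by their minimum element) and the canonical section $\pi_E\colon[m_E]\to[k]$ picking those minima, so that $\sigma_E\circ\pi_E = \id_{[m_E]}$ and $a\mathrel{E}\pi_E(\sigma_E(a))$ for every $a\in[k]$. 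The intended semantics is that $R_E(y_0,\ldots,y_{m_E-1})$ holds precisely when $y_0,\ldots,y_{m_E-1}$ are distinct and $R(y_{\sigma_E(0)},\ldots,y_{\sigma_E(k-1)})$ holds in the original structure, so that all translated structures are automatically non-redundant.

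To realize this as a quantifier-free interdefinition, let
\[
\chi_E(z_0,\ldots,z_{k-1}) := \bigwedge_{a<b,\, a E b} z_a = z_b \ \wedge \bigwedge_{a<b,\, \neg(a E b)} z_a\ne z_b
\]
be the quantifier-free formula asserting that $(z_0,\ldots,z_{k-1})$ has equality pattern exactly $E$. Let $\Psi_0$ and $\Psi_1$ send equalities to equalities, and on the remaining atomic formulas set
\[
\Psi_0\bigl(R(x_{i_0},\ldots,x_{i_{k-1}})\bigr) := \bigvee_E \Bigl(\chi_E(x_{i_0},\ldots,x_{i_{k-1}}) \wedge R_E\bigl(x_{i_{\pi_E(0)}},\ldots,x_{i_{\pi_E(m_E-1)}}\bigr)\Bigr),
\]
\[
\Psi_1\bigl(R_E(y_0,\ldots,y_{m_E-1})\bigr) := \bigwedge_{i<j} y_i\ne y_j \ \wedge \ R\bigl(y_{\sigma_E(0)},\ldots,y_{\sigma_E(k-1)}\bigr).
\]
Extend both maps to all of $\Lww(L_0)$ and $\Lww(L_1)$ by commuting with negation, conjunction, and existential quantification; this extension automatically satisfies the remaining commutation clauses of Definition~\ref{interdefinability}, preserves quantifier-freeness, and fixes free-variable sets. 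By induction on formula structure, it then suffices to verify the two composition biconditionals on atomic formulas.

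For the first, $\emptyset \vdash \Psi_1\circ\Psi_0(R(\ol{x})) \leftrightarrow R(\ol{x})$: applying $\Psi_1$ to each disjunct of $\Psi_0(R(\ol{x}))$, under $\chi_E(\ol{x})$ the distinctness conjuncts in $\Psi_1(R_E)$ hold automatically (class representatives lie in distinct classes) and $a\mathrel{E}\pi_E(\sigma_E(a))$ forces $x_{i_a}=x_{i_{\pi_E(\sigma_E(a))}}$, so each $E$-disjunct collapses to $\chi_E(\ol{x})\wedge R(\ol{x})$; since $\bigvee_E\chi_E(\ol{x})$ is a tautology (exactly one equality pattern holds of any tuple), the whole expression is equivalent to $R(\ol{x})$. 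For the second, $\Thnr(L_1) \vdash \Psi_0\circ\Psi_1(R_E(\ol{y})) \leftrightarrow R_E(\ol{y})$: the distinctness of $\ol{y}$ (assumed via the conjunct in $\Psi_1$ and ensured by $\Thnr(L_1)$ in the reverse direction) forces $(y_{\sigma_E(0)},\ldots,y_{\sigma_E(k-1)})$ to have equality pattern exactly $E$, so only the $E$-summand of $\Psi_0(R(\ldots))$ survives, and $\sigma_E\circ\pi_E=\id_{[m_E]}$ recovers $R_E(\ol{y})$. The main source of friction is simply bookkeeping the indices through $\sigma_E$, $\pi_E$, and the relabelled tuples; once this combinatorial data is fixed as above, the biconditionals reduce to direct case analyses on equality patterns.
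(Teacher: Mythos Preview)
Your proposal is correct and follows essentially the same construction as the paper: introduce a symbol $R_E$ for each equality pattern $E$ on $[k]$, translate $R(\bar x)$ as the disjunction over $E$ of ``pattern is $E$'' $\wedge$ $R_E$(representatives), and translate $R_E(\bar y)$ back to $R$ applied to the $\sigma_E$-expanded tuple. Your version is in fact slightly more careful than the paper's sketch: you include the distinctness conjunct $\bigwedge_{i<j} y_i\neq y_j$ in $\Psi_1(R_E(\bar y))$, which is what guarantees that $\Psi^*_{0,\mathfrak n}$ actually lands in $\Extent{\Thnr(L_1)}$, and you spell out the two composition biconditionals that the paper leaves as ``one can check.''
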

%%%%%%%%%%% %%%%%%%%%%% %%%%%%%%%%% %%%%%%%%%%%

%%%%%%%%%%% %%%%%%%%%%% %%%%%%%%%%% %%%%%%%%%%%
\begin{proof}
	For each relation symbol $R\in \Lang_0$ and
	equivalence relation $\Equiv$ on $[k]$,
	where $k\in\Nats$ is the arity of $R$, we define the following.
	Let $\ell$ be the number of $\Equiv$-equivalence classes.
	Let $f_\Equiv\colon [k] \to [k]$ send each $i\in[k]$ to the least element of its $\Equiv$-equivalence class,
	and let $\<y^\Equiv_0, \dots, y^\Equiv_{\ell-1}\>$ be the increasing enumeration of the image of $f_\Equiv$.
	Let $R_{\Equiv}$ be an $\ell$-ary relation symbol and let 
\[
	\phi_{\Equiv}(x_0, \dots, x_{k-1}) \defas R_{\Equiv}(x_{y_0^\Equiv}, \dots, x_{y_{\ell-1}^\Equiv}) \And \bigwedge_{\substack{i, j \in [k]\\ i \Equiv j}} x_i = x_j
	\And \bigwedge_{\substack{i, j \in [k]\\ \neg(i \Equiv j)}} x_{i}  \neq x_{j}
	.
\]

	Define $\Lang_1$ to be the collection of all such symbols $R_{\Equiv}$. 
Recall the $\Pi_1$ theory $\Thnr(\Lang_1)$, as defined in Definition~\ref{Thnr-def}.

	Define the map $\Psi_0$ on atomic $\Lang_0$-formulas by
\[
	\Psi_0\bigl(R(x_0, \dots, x_{k-1})\bigr) \defas 
	\bigvee_{\substack{\Equiv\text{~is an equivalence}\\\text{relation on }[k]} }
	\phi_{\Equiv}(x_0, \dots, x_{k-1}),
\]
for each $R\in\Lang_0$, where $k$ is the arity of $R$.
	Define the map $\Psi_1$ on atomic $\Lang_1$-formulas by
\[
	\Psi_1\bigl(R_\Equiv(x_0, \dots, x_{\ell-1})\bigr) \defas
	R(x_{f_\Equiv(0)}, \dots, x_{f_\Equiv(k-1)})
\]
for each $R_\Equiv\in\Lang_1$, where $\ell$ is the arity of $R_\Equiv$.
Extend
	$\Psi_0$ and $\Psi_1$ to all formulas in $\Lww(\Lang_0)$ and $\Lww(\Lang_1)$, respectively, in the natural way.

Observe that for any $\M\in \Str_{L_0}(\Nats)$,  the $L_1$-structure
$\Psi^*_{0,\Nats}(\M)$ is non-redundant, and conversely, every non-redundant structure in
$\Str_{L_1}(\Nats)$ is in the image of $\Psi^*_{0,\Nats}$.
One can check 
	that $(\Psi_0, \Psi_1)$ is a quantifier-free interdefinition between the empty $\Lang_0$-theory and $\Thnr(\Lang_1)$.
\end{proof}

%%%%%%%%%%% %%%%%%%%%%% %%%%%%%%%%% %%%%%%%%%%%

We can now show that every entropy function is the entropy function of some non-redundant invariant measure.

\begin{proposition}
	\label{nonredundant-entropy-function}
	Let $\mu$ be an invariant measure on $\Str_{\Lang_0}(\Nats)$.
	Then there is a countable relational language $\Lang_1$ and a non-redundant invariant measure $\nu$ on 
	$\Str_{\Lang_1}(\Nats)$ such that 
	$\Entropy{\mu} = \Entropy{\nu}$.
	Further, if $L_0$ is finite and of maximum arity $k$,
	then so is $L_1$.
\end{proposition}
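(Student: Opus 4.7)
The plan is to assemble the machinery developed in this subsection by composing two quantifier-free interdefinitions, each yielding an entropy-preserving pushforward via Lemma~\ref{Interdefinitions-preserve-entropy}. The result will follow by tracking that at the end we land in a relational language with a non-redundant measure.

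First I would reduce to the relational case. By Lemma~\ref{AFP-consequence}, the given measure $\mu$ concentrates on $\Extent{\Thfun(\Lang_0)}$, so in particular it concentrates on the $\Pi_1$ theory to which Lemma~\ref{atomic definition with functions} applies. Lemma~\ref{atomic definition with functions} supplies a relational language $\Lang_0'$, a $\Pi_1$ $\Lang_0'$-theory $T$, and a quantifier-free interdefinition between $\Thfun(\Lang_0)$ and $T$. Applying Lemma~\ref{Interdefinitions-preserve-entropy} to this interdefinition, the pushforward $\mu'$ of $\mu$ along the induced bijection is an invariant measure on $\Str_{\Lang_0'}(\Nats)$ concentrated on $\Extent{T}$, with $\Entropy{\mu'}=\Entropy{\mu}$.

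Next I would eliminate redundancy. Since $\Lang_0'$ is relational, Lemma~\ref{Reduction to non-redundant} produces a countable relational language $\Lang_1$ and a quantifier-free interdefinition between the empty $\Lang_0'$-theory and $\Thnr(\Lang_1)$. Because every structure satisfies the empty theory, $\mu'$ is trivially concentrated on $\Extent{\emptyset}$, so Lemma~\ref{Interdefinitions-preserve-entropy} applies to yield an invariant measure $\nu$ on $\Str_{\Lang_1}(\Nats)$ concentrated on $\Extent{\Thnr(\Lang_1)}$ with $\Entropy{\nu}=\Entropy{\mu'}$. By Definition~\ref{Thnr-def} this is exactly non-redundance of $\nu$, and chaining the equalities gives $\Entropy{\nu}=\Entropy{\mu}$.

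For the finiteness clause, I would inspect the two constructions. The language $\Lang_0'$ of Lemma~\ref{atomic definition with functions} adds, for each function symbol $f\in\Lang_0$ of arity $\ell$, exactly $\ell$ relation symbols of arity $\ell$ (and keeps each existing relation symbol at its original arity); so if $\Lang_0$ is finite with maximum arity $k$, the same holds for $\Lang_0'$. The language $\Lang_1$ of Lemma~\ref{Reduction to non-redundant} introduces, for each $R\in\Lang_0'$ of arity $k'\le k$ and each equivalence relation on $[k']$, a single relation symbol of arity at most $k'$; since there are only finitely many such equivalence relations, $\Lang_1$ remains finite with maximum arity $k$. There is no genuine obstacle here: the entire argument is a bookkeeping exercise in composing the previous three lemmas, and the only point demanding care is checking that the hypotheses of Lemma~\ref{Interdefinitions-preserve-entropy} (namely, that the measure is concentrated on the extent of the source $\Pi_1$ theory) are met at each application.
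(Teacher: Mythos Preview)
Your proposal is correct and follows essentially the same approach as the paper: reduce to a relational language via Lemmas~\ref{AFP-consequence} and \ref{atomic definition with functions}, then eliminate redundancy via Lemma~\ref{Reduction to non-redundant}, applying Lemma~\ref{Interdefinitions-preserve-entropy} at each stage. Your second application is in fact slightly cleaner than the paper's, which first observes that the interdefinition from Lemma~\ref{Reduction to non-redundant} restricts to one between $T$ and its image $\Theta_0(T)$ before invoking Lemma~\ref{Interdefinitions-preserve-entropy}; you bypass this by using the empty theory directly as the source, which is perfectly valid since $\mu'$ trivially concentrates on it.
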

\begin{proof}
	Recall the $\Pi_1$ $L_0$-theory $\Thfun(\Lang_0)$ from Definition~\ref{Thfun-def}.
	First observe by 
	Lemma~\ref{AFP-consequence} that $\mu(\Extent{\Thfun(\Lang_0)}) = 1$.  
	By Lemma~\ref{atomic definition with functions} 
	there is a quantifier-free interdefinition $(\Psi_0, \Psi_1)$ between 
	$\Thfun(\Lang_0)$ and
	some $\Pi_1$ theory $T$ in a countable relational language $L'$. 
	When $L_0$ is finite and of maximum arity $k$, so is $L'$.

By Lemma~\ref{Reduction to non-redundant}
		there is a countable relational language $\Lang_1$, and a quantifier-free interdefinition $\Theta = (\Theta_0, \Theta_1)$
between the empty $L'$-theory and the $\Pi_1$ $\Lang_1$-theory $\Thnr$.
		It is easy to see from Definition~\ref{interdefinability} that $\Theta$ is also 
	a quantifier-free interdefinition between
	$T$ and its image $\Theta_0(T)$, which is a $\Pi_1$ $\Lang_1$-theory.
	Again, when $L'$ is finite and of maximum arity $k$, so is $L_1$.

	By two applications of Lemma~\ref{Interdefinitions-preserve-entropy}, there is an invariant measure $\nu$ on $\Str_{L_1}(\Nats)$ that is concentrated on
	$\Extent{{\Theta_0}(T)}$ and that satisfies $\Entropy{\mu} = \Entropy{\nu}$.
	Finally, $\nu$ is non-redundant because ${\Theta_0}(T)$ contains $\Thnr$.
\end{proof}

%%%%  %%%%  %%%%  %%%%  %%%%  %%%%  %%%%  %%%%  
\section{Invariant measures arising from extended $L$-hypergraphons}
\label{section-finite}
%%%%  %%%%  %%%%  %%%%  %%%%  %%%%  %%%%  %%%%  
In this section we study the growth of entropy functions of non-redundant invariant measures
for a relational language $L$ whose relation symbols all have the same arity $k \ge 1$.
By a 
variant of the 
  Aldous--Hoover--Kallenberg theorem, 
these invariant measures are precisely the ones that arise as
  the distribution of a
certain random $L$-structure $G(\Nats, W)$, where $W$ is 
  a type of measurable function called an \emph{extended $L$-hypergraphon}.
In Theorem~\ref{Entropy-from-hypergraphon}
  we express the growth of the entropy function of $G(\Nats, W)$
  in terms of $W$.
  This result
  generalizes a theorem 
of Aldous \cite[Chapter~15]{MR883646} 
 and Janson \cite[Theorem~D.5]{MR3043217},
 and our argument mirrors that of Janson.
	 As an immediate consequence of Theorem~\ref{Entropy-from-hypergraphon} and the Aldous--Hoover--Kallenberg theorem, we see
in Corollary~\ref{finite-relational-cor}
that when $L$ is finite, the entropy function of any non-redundant
invariant measure for $\Lang$ is
 $O(n^k)$.

%%%%%%%% %%%%%%%% %%%%%%%% %%%%%%%%
\emph{For the rest of this section,
fix $k\ge1$
and let $\Lang$ be a countable relational language (possibly infinite) all of whose relations have arity $k$.}

	We will define an extended $L$-hypergraphon to 
be a probability kernel from $[0,1]^{\Powerset_{<k}(k)}$ 
to a space of quantifier-free $k$-types, satisfying a specific coherence condition.

\begin{definition}
	A \defn{complete non-redundant quantifier-free $\Lang$-type} with free variables $x_0, \dots, x_{k-1}$ is a maximal consistent collection of atomic formulas or negations of atomic formulas 
	containing $\{ x_i \neq x_j \st i < j < k\}$, and
	whose free variables are all contained in $\{x_0, \dots, x_{k-1}\}$.

	 Let $\qf_\Lang$ 
	be the space of complete non-redundant quantifier-free $\Lang$-types with free variables $x_0, \dots, x_{k-1}$, where a subbasic clopen set consists of complete non-redundant quantifier-free $\Lang$-types containing a given atomic formula or negated atomic formula all of whose free variables are contained in $\{x_0, \dots, x_{k-1}\}$.
\end{definition}

Note that every complete non-redundant quantifier-free $\Lang$-type in the above sense implies a complete  quantifier-free  $\Lwow(\Lang)$-type.

		In the next definition, 
we introduce the notion of an extended $\Lang$-hypergraphon.
As we describe below, this 
generalizes the standard notion of a hypergraphon \cite[\S23.3]{MR3012035}, which is a higher-arity version of a graphon.

		The definition of extended $L$-hypergraphon 
		involves a coherence condition, specified in terms of an action of $\Sym{k}$.
	In Definition~\ref{zeta-def}, we describe how an extended $\Lang$-hypergraphon 
	gives rise to 
	a distribution on $\Lang$-structures 
	by determining, for each $k$-tuple of elements, the distribution on its quantifier-free type.
The coherence condition 
ensures
	that the order in which the $k$-tuple is specified does not affect the resulting distribution on its quantifier-free type.

Consider the action of $\Sym{k}$ on $\qf_\Lang$ given by
\[
	\sigma \cdot u  = \{\varphi(x_{\sigma(i_0)}, \ldots, x_{\sigma(i_{\ell-1})}) \st \varphi(x_{i_0}, \ldots, x_{i_{\ell-1}})\in u 
	\textrm{~and~} \ell \le k\}
	\]
for $\sigma\in\Sym{k}$ and $u \in \qf_\Lang$.
Note that this action of $\Sym{k}$ extends to $\ProbMeas(\qf_\Lang)$ in the natural way, namely,
\[
	(\sigma \cdot \nu) (B)
= 	\nu(\{\sigma \cdot u \st u\in B\})
\]
for $\nu \in \ProbMeas(\qf_\Lang)$
	and Borel $B \subseteq \qf_\Lang$.

\begin{definition}
	\label{def-extended-hypergraphon}
An \defn{extended $\Lang$-hypergraphon} is a measurable map 
\[
W\colon[0,1]^{\Powerset_{<k}(k)} \to \ProbMeas(\qf_\Lang)
\]
	such that for any $\<x_F\>_{F\in \Powerset_{<k}(k)}\in [0,1]^{\Powerset_{<k}(k)}$ and 
	$\sigma \in \Sym{k}$,
	\[
		 W(\<x_{\sigma(F)}\>_{F\in \Powerset_{<k}(k)})
		 = 
		\sigma \cdot W(\<x_F\>_{F\in \Powerset_{<k}(k)}).
		\]
\end{definition}

The next technical lemma and definitions show how an
extended $L$-hyper\-graphon $W$ gives rise to a random non-redundant $L$-structure $G(\Nats, W)$ whose distribution is an invariant measure on $\Str_\Lang(\Nats)$.

The following is a special case of a standard result from probability theory about the randomization of a kernel.

\begin{lemma}[\textnormal{see {\cite[Lemma~3.22]{MR1876169}}}]
\label{Hypergraphon to AH representation}
	Let $W\colon[0,1]^{\Powerset_{<k}(k)} \to \ProbMeas(\qf_\Lang)$ be an extended $\Lang$-hypergraphon. There is a measurable function $W^*\colon[0,1]^{\Powerset_{<k}(k)}\times [0,1] \to \qf_\Lang$ such that whenever $\zeta$ is a uniform random variable in $[0,1]$, then for all $t \in [0,1]^{\Powerset_{<k}(k)}$, the random variable $W^*(t, \zeta)$ has distribution $W(t)$. 
\end{lemma}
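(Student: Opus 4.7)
The plan is to deduce the lemma directly from the standard randomization-of-a-kernel result \cite[Lemma~3.22]{MR1876169} cited in the statement. The only substantive item to check is that $\qf_\Lang$ is a standard Borel space, which enables the cited result to apply with the target space $\qf_\Lang$; beyond that, the lemma is essentially just a rebadging.

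First I would verify that $\qf_\Lang$ is a standard Borel space. Since $\Lang$ is countable and the free variables of each type in $\qf_\Lang$ are restricted to $x_0, \dots, x_{k-1}$, there are only countably many atomic $\Lang$-formulas whose free variables lie in this finite set. Fixing an enumeration of these atomic formulas, each complete non-redundant quantifier-free type $u \in \qf_\Lang$ is determined by its characteristic function on the resulting countable set, and this identification carries the subbasic clopen sets of $\qf_\Lang$ to the standard subbasic clopen sets of $\{0,1\}^\omega$. The maximality and consistency conditions, together with the requirement that $\{x_i \neq x_j \st i < j < k\}$ be contained in each type, cut out a closed subset of $\{0,1\}^\omega$. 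Hence $\qf_\Lang$ is a compact Polish space, and in particular standard Borel.

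Next I would observe that, with $\ProbMeas(\qf_\Lang)$ equipped with its weak topology and the resulting Borel $\sigma$-algebra, a measurable map $W$ as in Definition~\ref{def-extended-hypergraphon} is precisely a probability kernel from $[0,1]^{\Powerset_{<k}(k)}$ to $\qf_\Lang$ in the sense of \cite{MR1876169}. Applying \cite[Lemma~3.22]{MR1876169} to this kernel then produces a measurable function $W^*\colon [0,1]^{\Powerset_{<k}(k)} \times [0,1] \to \qf_\Lang$ such that, for $\zeta$ uniform on $[0,1]$ and every $t \in [0,1]^{\Powerset_{<k}(k)}$, the random element $W^*(t, \zeta)$ has distribution $W(t)$. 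Note that the $\Sym{k}$-symmetry condition from Definition~\ref{def-extended-hypergraphon} plays no role here: only the kernel (i.e., measurability) structure of $W$ is used.

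The only point requiring even minor care is the verification that $\qf_\Lang$ is standard Borel; once that is in hand, the conclusion is a direct citation, and I do not anticipate any genuine obstacle.
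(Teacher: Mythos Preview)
Your proposal is correct and matches the paper's approach exactly: the paper offers no proof beyond the remark that the lemma is a special case of the randomization-of-a-kernel result \cite[Lemma~3.22]{MR1876169}, and you simply supply the routine verification (that $\qf_\Lang$ is standard Borel) needed to invoke that citation. There is nothing to add.
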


Definition~\ref{hat-def}
introduces notation that we will use throughout this section and the next.

\begin{definition}
	\label{hat-def}
	Let $\fkn \in \Nats \cup \{\Nats\}$,
	and suppose $\fkn\geq k$ (when $\fkn \in \Nats$).
	Let $J\in \Powerset_k(\fkn)$,
	and
let $\tau_J\colon [k] \to J$ be the unique increasing bijection from $[k]$ to $J$. 
Define 
$\hat{X}_{J}$ to be the sequence 
$\<X_{\tau_J(F)}\>_{F \in \Powerset_{<k}(k)}$
		consisting of terms of the form $X_I$, where $I$ is from $\Powerset_{<k}(\fkn)$.
\end{definition}

	Recall that $L$ consists of relation symbols all of arity $k$, and so in order to define a non-redundant $L$-structure, it suffices to describe the complete quantifier-free type of every strictly increasing $k$-tuple in the structure.

\begin{definition}
	\label{zeta-def}
	Let $W$ be an extended $\Lang$-hypergraphon and $\fkn \in \Nats \cup \{\Nats\}$,
	and suppose $\fkn\geq k$ (when $\fkn \in \Nats$).

	\begin{itemize}
	\item 
		Define $M(\fkn, W)\colon [0, 1]^{\Powerset_{\leq k}(\fkn)} \to \Str_\Lang(\fkn)$ to be the map such that for all $\<x_D\>_{D\in \Powerset_{\leq k}(\fkn)} \in [0, 1]^{\Powerset_{\leq k}(\fkn)}$
		and every 
		$J\in\Powerset_k(\fkn)$, 
			the quan\-tifier-\linebreak{}free type of the tuple $\<\tau_J(0), \ldots, \tau_J(k-1)\>$ in
		the $L$-structure  \linebreak
		$M(\fkn, W)(\<x_D\>_{D \in \Powerset_{\le k}(\fkn)})$
		is
		$W^*(\hat{x}_{J}, x_J)$.
	\item 
		Let $\< \zeta_D \>_{D\in \Powerset_{\leq k}(\fkn)}$ be an i.i.d.\ sequence of uniform random \linebreak variables in $[0, 1]$. Define $G(\fkn, W)$ to be the random $\Lang$-structure $M(\fkn, W)(\< \zeta_D \>_{D\in \Powerset_{\leq k}(\fkn)})$.

	\item For $J \in \Powerset_k(\fkn)$,
	define the random variable $E^W_J \defas W^*(\hat{\zeta}_{J}, \zeta_J)$.
	\end{itemize}
\end{definition}

	In summary, $G(\fkn, W)$ is the random $L$-structure with underlying set $[\fkn]$
whose quantifier-free $k$-types are given by the random variables $E^W_J$ for $J \in \Powerset_k(\fkn)$.
It is easy to check that the distribution of $G(\fkn,W)$ does not depend on the specific choice of i.i.d.\ uniform $\< \zeta_D \>_{D\in \Powerset_{\leq k}(\fkn)}$ or on the specific function $W^*$ satisfying Lemma~\ref{Hypergraphon to AH representation}. 

Observe that for an extended $L$-hypergraphon $W$, the distribution of $G(\Nats, W)$ is a
probability measure on $\Str_\Lang(\Nats)$ that is invariant because \linebreak $\< \zeta_D \>_{D\in \Powerset_{\leq k}(\Nats)}$ is i.i.d., and is non-redundant because $W^*$ takes values in $\qf_\Lang$.
In fact, as 
Theorem~\ref{main-hypergraphon-theorem} asserts,
every non-redundant invariant measure on $\Str_\Lang(\Nats)$ arises from some $W$ in this way; this result
is a variant of the usual Aldous--Hoover--Kallenberg theorem, and
is a specialization of \cite[Theorem~2.37]{AckermanAutM}.

%%%%%%%%%% %%%%%%%%%% %%%%%%%%%% %%%%%%%%%%
\begin{theorem}[{Aldous--Hoover--Kallenberg theorem}]
\label{main-hypergraphon-theorem}
	A non-redun\-dant probability measure $\mu$ on $\Str_\Lang(\Nats)$ 
is invariant under the action of $\Sym{\Nats}$ 
if and only if $\mu$ is the distribution of $G(\Nats, W)$ for some extended $\Lang$-hypergraphon $W$. 
\end{theorem}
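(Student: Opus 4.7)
The proof has two directions; the $(\Leftarrow)$ direction is routine verification while $(\Rightarrow)$ constitutes the substantive content. For $(\Leftarrow)$, I check that when $\mu$ is the distribution of $G(\Nats, W)$ for an extended $L$-hypergraphon $W$, it is both non-redundant and invariant. Non-redundance holds because $W^*$ takes values in $\qf_L$, whose members are by definition complete non-redundant quantifier-free types. For invariance under $\sigma \in \Sym{\Nats}$, the i.i.d.\ uniform array $\langle \zeta_D \rangle_{D \in \Powerset_{\leq k}(\Nats)}$ is equidistributed with its relabeling $\langle \zeta_{\sigma(D)} \rangle_{D \in \Powerset_{\leq k}(\Nats)}$. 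Because for a $k$-subset $J$ the enumerations $\tau_{\sigma(J)}$ and $\sigma \circ \tau_J$ differ by some element of $\Sym{k}$, the coherence condition in Definition~\ref{def-extended-hypergraphon} is exactly what is needed to ensure the induced distribution on quantifier-free types transforms equivariantly under the logic action, yielding $\sigma \cdot G(\Nats, W) \stackrel{d}{=} G(\Nats, W)$.

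For $(\Rightarrow)$, I set up a jointly exchangeable array and appeal to the classical Aldous--Hoover--Kallenberg representation theorem. Given a non-redundant invariant $\mu$, for each $J \in \Powerset_k(\Nats)$ let $T_J$ be the $\qf_L$-valued random variable that assigns to $M \sim \mu$ the quantifier-free type of $\langle \tau_J(0), \ldots, \tau_J(k-1)\rangle$ in $M$. Non-redundance of $\mu$ ensures $T_J$ indeed lives in $\qf_L$, and $\Sym{\Nats}$-invariance of $\mu$ yields joint exchangeability of $\langle T_J \rangle_{J \in \Powerset_k(\Nats)}$ as a $\qf_L$-valued $k$-array. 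Aldous--Hoover--Kallenberg then supplies an i.i.d.\ uniform family $\langle \zeta_D \rangle_{D \in \Powerset_{\leq k}(\Nats)}$ and a measurable $f \colon [0,1]^{\Powerset_{<k}(k)} \times [0,1] \to \qf_L$ such that $\langle T_J \rangle_J \stackrel{d}{=} \langle f(\hat{\zeta}_J, \zeta_J) \rangle_J$ jointly. I then define $W(\langle x_F \rangle_F)$ to be the distribution of $f(\langle x_F \rangle_F, \eta)$ where $\eta$ is uniform on $[0,1]$; the randomization of Lemma~\ref{Hypergraphon to AH representation} recovers $f$ as an allowable $W^*$, so that $G(\Nats, W)$ is distributed as $\mu$.

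The main obstacle is verifying the coherence condition of Definition~\ref{def-extended-hypergraphon}, namely that for every $\sigma \in \Sym{k}$ the identity $W(\langle x_{\sigma(F)} \rangle_F) = \sigma \cdot W(\langle x_F \rangle_F)$ holds on all of $[0,1]^{\Powerset_{<k}(k)}$. The function $f$ produced by a generic application of Aldous--Hoover need only satisfy this up to $\lambda$-a.e.\ equivalence that one reads off from the $\Sym{\Nats}$-invariance. To fix this, I would symmetrize: for each $\sigma \in \Sym{k}$, the expressions $f(\hat{\zeta}_J, \zeta_J)$ and $\sigma^{-1} \cdot f(\hat{\zeta}_{\sigma(J)}, \zeta_{\sigma(J)})$ both represent the same exchangeable array, and a Fubini-style argument over the finite group $\Sym{k}$ produces a measurable $\tilde{f}$ that equals $f$ almost everywhere and satisfies the coherence condition pointwise after discarding a $\lambda$-null subset of the domain. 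Alternatively, as the excerpt notes, one can simply specialize \cite[Theorem~2.37]{AckermanAutM}, where exactly this symmetrization is carried out in the more general setting of invariant measures on arbitrary countable relational languages.
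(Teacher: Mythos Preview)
The paper does not supply its own proof of this theorem: it is stated as a variant of the classical Aldous--Hoover--Kallenberg theorem and attributed to \cite[Theorem~2.37]{AckermanAutM}, with the reader referred to \cite[Chapter~7]{MR2161313} for background. The $(\Leftarrow)$ direction is handled by the paragraph immediately preceding the theorem statement, and your argument for that direction matches it.

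Your sketch of $(\Rightarrow)$ contains a genuine gap, however, in the claim that the set-indexed array $\langle T_J \rangle_{J \in \Powerset_k(\Nats)}$ is jointly exchangeable. In general it is not: because $T_J$ is defined via the increasing enumeration $\tau_J$, a permutation $\sigma \in \Sym{\Nats}$ introduces a $J$-dependent twist from $\Sym{k}$ on the values, and this twist does not cancel. Concretely, take $k=2$, let $L$ consist of one binary relation, and let $\mu$ be the exchangeable random linear order. With $\sigma$ the transposition of $0$ and $1$ one has $\sigma\{0,2\}=\{1,2\}$ and $\sigma\{1,2\}=\{0,2\}$, and a direct check gives
\[
\Pr\bigl(T_{\{0,1\}}={<},\ T_{\{0,2\}}={<},\ T_{\{1,2\}}={>}\bigr)=\tfrac16
\quad\text{while}\quad
\Pr\bigl(T_{\{0,1\}}={<},\ T_{\{1,2\}}={<},\ T_{\{0,2\}}={>}\bigr)=0,
\]
so $\langle T_{\sigma(J)}\rangle_J$ and $\langle T_J\rangle_J$ do not have the same distribution. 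Hence the set-indexed form of AHK does not apply to this array as written, and the later symmetrization step you describe does not rescue the argument, since the problem occurs before any representing function $f$ is produced.

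The fix is the one you only gesture at in your final sentence: work instead with the array $\langle \tilde T_{\bar a}\rangle$ indexed by ordered $k$-tuples of distinct elements, which \emph{is} jointly exchangeable under the diagonal $\Sym{\Nats}$-action and additionally carries the deterministic $\Sym{k}$-equivariance $\tilde T_{\bar a\circ\pi}=\pi^{-1}\cdot\tilde T_{\bar a}$. The tuple-indexed AHK representation, together with this extra symmetry, is what yields an extended $L$-hypergraphon satisfying the coherence condition of Definition~\ref{def-extended-hypergraphon}. That is precisely the content of \cite[Theorem~2.37]{AckermanAutM}, which is why the paper simply cites it rather than reproving it.
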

%%%%%%%%%% %%%%%%%%%% %%%%%%%%%% %%%%%%%%%%

	For more details on the usual Aldous--Hoover--Kallenberg theorem, which is stated in terms of exchangeable arrays, see \cite[Chapter~7]{MR2161313} and the historical notes to that chapter.
There is an essentially equivalent statement, in terms of hypergraphons, 
that 
provides a correspondence between hypergraphons (of arity $k$) and ergodic
invariant measures on the space of $k$-uniform graphs
	(see \cite{MR2463439} and \cite{MR2426176}).
The version that we have stated above in Theorem~\ref{main-hypergraphon-theorem}
provides an analogous correspondence between extended $L$-hypergraphons and 
non-redundant
invariant measures on 
$\Str_\Lang(\Nats)$. 
Note that,
unlike those invariant measures that arise from hypergraphons,
	the ones arising in Theorem~\ref{main-hypergraphon-theorem}
	can in general be mixtures of ergodic invariant measures, 
and can be concentrated on structures that 
include multiple relations (though finitely many, of the same arity) which need not be symmetric.
	
	In fact, we could have defined extended $\Lang$-hypergraphons, and then described the associated sampling procedure $G(\Nats, W)$ and stated the Aldous--Hoover--Kallenberg theorem, for the even more general case that does not stipulate non-redundance. However, the 
	invariant measure resulting from the sampling procedure would then be sensitive to measure $0$ changes in the extended $\Lang$-hypergraphon, 
	unlike the situation for standard hypergraphons or the usual Aldous--Hoover--Kallenberg representation, and so we have restricted our definitions and results to the non-redundant case.

	Using the machinery we have developed above, we can now prove the main theorems of this section.
By Theorem~\ref{main-hypergraphon-theorem}, 
in order to study the entropy function of a non-redundant invariant measure on $\Str_\Lang(\Nats)$,
we may 
ask for a suitable extended $L$-hypergraphon $W$ and 
then analyze the entropy function of $G(\Nats, W)$.
In Theorem~\ref{Entropy-from-hypergraphon}, we consider such a $G(\Nats, W)$, and express the leading term of its entropy function as a function of $W$.
As a consequence, in Corollary~\ref{finite-relational-cor}, when $L$ is finite we
obtain bounds on the entropy function of a non-redundant invariant measure for $L$.

The proof of 
Theorem~\ref{Entropy-from-hypergraphon}
closely follows that of \cite[Theorem~D.5]{MR3043217}.
%%%%%%%%%% %%%%%%%%%% %%%%%%%%%% %%%%%%%%%%
\begin{theorem}
\label{Entropy-from-hypergraphon}
Fix an extended $\Lang$-hypergraphon $W$.
Suppose that \linebreak $C \defas \int \EntropyFin{W(\hat{z}_{[k]})}\ d\lambda(\hat{z}_{[k]})$
is finite.
Then
\[
\lim_{n \to \infty} \frac{\Entropy{G(\Nats, W)}(n)}{|\Powerset_{k}(n)|} = \int \EntropyFin{W(\hat{z}_{[k]})}\ d\lambda(\hat{z}_{[k]}).
\]
In particular, $\Entropy{G(\Nats, W)}(n) = C n^k + o(n^k)$ for some constant $C$.
\end{theorem}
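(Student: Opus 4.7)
My plan is to follow Janson's argument for the graphon case \cite[Theorem~D.5]{MR3043217}, sandwiching $\Entropy{G(\Nats,W)}(n)$ between bounds that both approach $\binom{n}{k}\cdot C$ after division by $\binom{n}{k}$. Let $Z \defas \<\zeta_I\>_{I\in\Powerset_{<k}(n)}$ denote the ``lower-order'' i.i.d.\ uniforms. The key structural observation is that conditional on $Z$, the family $\{E^W_J : J\in\Powerset_k(n)\}$ is independent, since each $E^W_J = W^*(\hat{\zeta}_J, \zeta_J)$ depends on $Z$ only through $\hat{\zeta}_J$ and on the independent $\zeta_J$, while the $\zeta_J$'s for distinct $J$ are mutually independent. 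Moreover, $E^W_J$ conditional on $Z$ has distribution $W(\hat{\zeta}_J)$ by the defining property of $W^*$ from Lemma~\ref{Hypergraphon to AH representation}.

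For the lower bound, Lemma~\ref{cond-reduces-entropy} gives $\Entropy{G(\Nats,W)}(n) \geq \Exp(\RandomEntropyFin{G(n,W)\given Z})$, and the conditional independence above, together with the fact that $G(n,W)$ is determined by $\<E^W_J\>_{J\in\Powerset_k(n)}$, yields $\RandomEntropyFin{G(n,W)\given Z} = \sum_{J\in\Powerset_k(n)} \EntropyFin{W(\hat{\zeta}_J)}$ almost surely. Since $\hat{\zeta}_J$ is marginally uniform on $[0,1]^{\Powerset_{<k}(k)}$ for every $J$, taking expectations gives $\binom{n}{k}\cdot C$.

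For the upper bound I would discretize $Z$: for each $m\in\Nats$, set $M \defas 2^m$, partition $[0,1]$ into $M$ equal subintervals, and let $\tilde Z_m$ record the cell indices of all $\zeta_I$ with $I \in \Powerset_{<k}(n)$. Then $\tilde Z_m$ is discrete with $\EntropyFin{\tilde Z_m} \leq |\Powerset_{<k}(n)|\cdot m = O_m(n^{k-1})$. Combining the chain rule (Lemma~\ref{chainrule}) applied to $(G(n,W), \tilde Z_m)$, subadditivity (Lemma~\ref{joint-entropy-less-than-sum-entropy}) applied to the conditional distribution of $\<E^W_J\>_{J}$ given $\tilde Z_m$, and permutation symmetry of the construction, one obtains
\[
\Entropy{G(\Nats,W)}(n) \;\leq\; \EntropyFin{\tilde Z_m} + \binom{n}{k}\cdot a_m, \qquad a_m \defas \Exp(\RandomEntropyFin{E^W_{[k]}\given \tilde Z_m}).
\]
Dividing by $\binom{n}{k}$, sending $n\to\infty$ and then $m\to\infty$ gives $\limsup_n \Entropy{G(\Nats,W)}(n)/\binom{n}{k} \leq \lim_m a_m$.

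The main technical obstacle is showing $a_m \downarrow C$ as $m\to\infty$. The dyadic $\sigma$-algebras generated by the discretizations of the coordinates of $\hat{\zeta}_{[k]}$ increase to the Borel $\sigma$-algebra of $\hat{\zeta}_{[k]}$, and $E^W_{[k]}$ conditional on $\hat{\zeta}_{[k]}$ has distribution $W(\hat{\zeta}_{[k]})$; this is the standard monotone convergence of conditional entropy (equivalently, of mutual information) under refining $\sigma$-algebras. The hypothesis $C<\infty$ forces $W(\hat{\zeta}_{[k]})$ to be almost surely purely atomic, supplying the dominating control needed to interchange the limit with the outer expectation and conclude $\lim_m a_m = \Exp(\EntropyFin{W(\hat{\zeta}_{[k]})}) = C$. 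Together with the matching liminf, this yields the limit. The final ``$Cn^k + o(n^k)$'' statement then follows from $\binom{n}{k} = n^k/k! + O(n^{k-1})$, after absorbing the factor $1/k!$ into the constant.
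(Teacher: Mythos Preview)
Your proposal is correct and follows essentially the same route as the paper's proof: the lower bound by conditioning on all $\zeta_I$ with $|I|<k$ and using conditional independence of the $E^W_J$, and the upper bound by discretizing those variables, applying the chain rule plus subadditivity, and then refining the discretization. The only cosmetic differences are that the paper uses a general mesh $r$ rather than dyadic $2^m$, writes the averaged kernel explicitly as a step function $W_r$ and invokes dominated convergence for $\int h(W_r)\,d\lambda \to C$ (where you phrase the same step as monotone convergence of conditional entropy under refining $\sigma$-algebras), and you are in fact more careful than the paper about the $1/k!$ factor in the final $Cn^k+o(n^k)$ statement.
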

%%%%%%%%%% %%%%%%%%%% %%%%%%%%%% %%%%%%%%%%
\begin{proof}
	Observe that for all $n \in \Nats$, we have  $\Entropy{G(\Nats, W)}(n) = h(G(n, W))$.
	We first show a lower bound on $\EntropyFin{G(n, W)}$ for all $n\ge k$.  
	For $J \in \Powerset_k(n)$,
	the random variables $E^W_J$ 
	are independent conditioned on $\<\zeta_I\>_{I \in \Powerset_{<k}(n)}$
	and so, as the conditional entropy of conditionally independent random variables is additive,
we have
\begin{align*}
	\RandomEntropyFinbig{G(n, W) \givenbig \< \zeta_I\>_{I \in \Powerset_{<k}(n)}} \ &=\  \sum_{J \in \Powerset_k(n)} \RandomEntropyFinbig{E^W_J \givenbig \<\zeta_I\>_{I \in\Powerset_{<k}(n)}} \\
	\ &=\  \sum_{J \in \Powerset_k(n)} \RandomEntropyFinbig{\Exp\bigl(W(\hat{\zeta}_{J}) \givenbig  
	\<\zeta_I\>_{I \in \Powerset_{<k}(n)}\bigr)}\\
	\ &= \ \sum_{J \in \Powerset_k(n)} \RandomEntropyFin{W(\hat{\zeta}_{J})}
\end{align*}
	a.s.,
where the 
	last equality follows from the fact that every random variable in the sequence $\hat{\zeta}_{J}$  occurs within $\< \zeta_I\>_{I \in \Powerset_{<k}(n)}$.

By Lemma~\ref{cond-reduces-entropy}, 
	and then taking expectations of the first and last terms in the previous chain of equalities, we have
\begin{align*}
\label{Entropy sum of G(n W) given Bs}
%\tag{$*$}
\EntropyFinbig{G(n, W)} \ &\geq \ \Exp\bigl(\RandomEntropyFinbig{G(n, W) \givenbig 
	\<\zeta_I\>_{I \in \Powerset_{<k}(n)}}\bigr) \\
	\ &= \ \sum_{J \in \Powerset_k(n)} \Exp\bigl(\RandomEntropyFin{W(\hat{\zeta}_{J})}\bigr).
\end{align*}

	Because the distribution of the random variable
	$\hat{\zeta}_{J}$ is the same for all $J \in \Powerset_{k}(n)$, we have

\begin{align*}
	\sum_{J \in\Powerset_k(n)} \Exp\bigl(\RandomEntropyFin{W(\hat{\zeta}_{J})}\bigr)
\ &= \ |\Powerset_{k}(n)| \cdot  \Exp\bigl(\RandomEntropyFin{W(\hat{\zeta}_{[k]})}\bigr) \\
\ &= \ |\Powerset_{k}(n)| \cdot  \int \EntropyFin{W(\hat{z}_{[k]})}
\ d\lambda(\hat{z}_{[k]}).
\end{align*}
	This equation, together with the previous inequality, yields
\[
\frac{\EntropyFin{G(n, W)}}{|\Powerset_{k}(n)|} \ \geq \ \int \EntropyFin{W(\hat{z}_{[k]})}\ d\lambda(\hat{z}_{[k]}).
\]

Now we show an upper bound on $\EntropyFin{G(n, W)}$ for all $n\ge k$. Let $r \in \Nats$ be positive. 

	For $I \in \Powerset_{<k}(n)$, define $Y_I \defas \lfloor r \cdot \zeta_I \rfloor$, 
	so that
$Y_I = \ell$ precisely when $\frac{\ell}{r} \le \zeta_I < \frac{\ell + 1}{r}$. 
Then by Lemma~\ref{chainrule}, we have
\begin{align*}
\label{Entropy sum of G(n W) given Bs}
	\EntropyFinbig{G(n, W)} \ &\leq\  \EntropyFinbig{G(n, W), \<Y_I\>_{I \in \Powerset_{<k}(n)}} \\
	\ &=\ \EntropyFinbig{\<Y_I\>_{I \in\Powerset_{<k}(n)}} + \Exp\Bigl(\RandomEntropyFinbig{G(n, W) \givenbig \<Y_I\>_{I \in \Powerset_{<k}(n)}}\Bigr).
\end{align*}
	The first term in this last expression is straightforward to calculate; 
we have 
\[
	\EntropyFinbig{\<Y_I\>_{I \in \Powerset_{<k}(n)}} = \bigl |\Powerset_{<k}(n)\bigr | \cdot \log_2(r),
\]
as the $Y_I$ are uniformly distributed on $[r]$ and independent (because the $\zeta_I$ are i.i.d.\ uniform on $[0,1]$).

We next calculate the second term; we will show that
\begin{align*}
	\Exp\Bigl(\RandomEntropyFinbig{G(n, W) \given \<Y_I\>_{I \in \Powerset_{<k}(n)}}\Bigr) 
\ &=\  |\Powerset_{k}(n)|\cdot \int \EntropyFin{W_r(\hat{z}_{[k]})}\ d\lambda(\hat{z}_{[k]}).
\end{align*}

By (the standard conditional extension of) Lemma~\ref{joint-entropy-less-than-sum-entropy}, we have 
\begin{align*}
	\RandomEntropyFinbig{G(n, W) \givenbig \<Y_I\>_{I \in\Powerset_{<k}(n)}} 
	\ &\leq\  \sum_{J \in \Powerset_k(n)}\RandomEntropyFinbig{E^W_J \givenbig \<Y_I\>_{I \in \Powerset_{<k}(n)}} \\
\ &=\ \sum_{J \in \Powerset_k(n)}\RandomEntropyFinbig{E^W_J \givenbig \hat{Y}_{J}}
\end{align*}
a.s.,
	where the last equality follows from the fact that the only random variables in $\<\zeta_I\>_{I \in \Powerset_{<k}(n)}$ on which a given $E^W_J$ depends are those among $\hat{\zeta}_J$.

Given a function $\alpha\colon \Powerset_{<k}(k) \to [r]$,
define 
\begin{align*}
	w_r(\alpha)	\ \defas\  r^{|\Powerset_{<k}(k)|} \cdot \int W(\hat{z}_{[k]}) \cdot \prod_{F \in \Powerset_{<k}(k)} \Ind_{\bigl[\frac{\alpha(F)}{r},\frac{\alpha(F) + 1}{r}\bigr)}(z_{F})\ d \lambda(\hat{z}_{[k]}),
\end{align*}
where $\Ind_S$ denotes the characteristic function of a set $S$ (in this case, a half-open interval). 
Observe that 
\begin{align*}
w_r(\alpha) \ = \ \Exp\Big(W(\hat{\zeta}_{[k]}) \givenBig \bigwedge_{F \in \Powerset_{<k}(k)}Y_{F} = \alpha(F)\Big)
\end{align*}
a.s.
(We may form this conditional expectation 
because the set of
signed measures on $\qf_\Lang$ is a Banach space, and hence integration on 
this space
is well-defined.)

For $\hat{z}_{[k]} \in [0, 1]^{\Powerset_{<k}(k)}$,
define $\beta_{\hat z_{[k]}}\colon\Powerset_{<k}(k) \to [r]$ by $\beta_{\hat z_{[k]}}(F) = \lfloor r \cdot z_F \rfloor$ for $F \in \Powerset_{<k}(k)$, and
let 
$W_r(\hat{z}_{[k]}) \defas w_r(\beta_{\hat z_{[k]}})$.
Observe that while we have defined continuum-many instances of $\beta_{\hat z_{[k]}}$, 
they range over the merely finitely many functions from $\Powerset_{<k}(k)$ to $[r]$.
Further note that $W_r$ is a step function,
and
that as $r\to \infty$, the function  $W_r$ converges to $W$ pointwise a.e.
We can think of $W_r$ as the result of discretizing $W$ along blocks of width $1/r$.

Recall the function $\tau_J$ defined in Definition~\ref{hat-def}.
For any $J \in \Powerset_k(n)$, we have
\begin{eqnarray*}
	\Pr\Big(E^W_J 
	\hspace*{-2pt}
	\in \cdot \givenBig 
	\hspace*{-5pt}
	\bigwedge_{F \in \Powerset_{<k}(k)}
	\hspace*{-10pt}
	Y_{\tau_J(F)} = \beta_{\hat z_{[k]}}(F)\Big)
	\hspace*{-2pt}
	\ &=&\ 
	\Exp\Big(W(\hat{\zeta}_{J}) \givenBig 
	\hspace*{-5pt}
	\bigwedge_{F \in \Powerset_{<k}(k)}
	\hspace*{-10pt}
	Y_{\tau_J(F)} = \beta_{\hat z_{[k]}}(F)\Big ) \\
	\ &=&\ 
	\Exp\Big(W(\hat{\zeta}_{[k]}) \givenBig 
	\hspace*{-5pt}
	\bigwedge_{F \in \Powerset_{<k}(k)}
	\hspace*{-10pt}
	Y_{F} = \beta_{\hat z_{[k]}}(F)\Big ) \\
	\ &=&\  w_r(\beta_{\hat z_{[k]}})
\end{eqnarray*}
a.s.,
because 
$\tau_J$ is a bijection from $[k]$ to $J$ and
	the distribution of the random variable
	$\hat{\zeta}_{J}$ is the same as that of
	$\hat{\zeta}_{[k]}$. 
Therefore
	\[
		\EntropyFinBig{E^W_J \in \cdot \givenBig 
	\bigwedge_{F \in \Powerset_{<k}(k)}
	Y_{\tau_J(F)} = \beta_{\hat z_{[k]}}(F)}
	=
	\EntropyFin{w_r(\beta_{\hat z_{[k]}})}.
	\]
	Summing both sides over all possible
choices of 
$\beta_{\hat z_{[k]}}$,
	we have
	\[
		\sum_{\alpha\colon \Powerset_{<k}(k) \to [r]}\EntropyFinBig{E^W_J \in \cdot \givenBig 
	\bigwedge_{F \in \Powerset_{<k}(k)}
	Y_{\tau_J(F)} = \alpha(F)}
	=
	\hspace*{-7pt}
	\sum_{\alpha\colon\Powerset_{<k}(k) \to [r]}
	\hspace*{-4pt}
	\EntropyFin{w_r(\alpha)}).
	\]

As before,	
\[
	\RandomEntropyFinbig{E^W_J \givenbig \<Y_I\>_{I \in \Powerset_{<k}(n)}}
	= \RandomEntropyFinbig{E^W_J \givenbig \hat{Y}_J}.
	\]
	One can directly show that
\begin{align*}
	\Exp\Bigl(\RandomEntropyFinbig{E^W_J \givenbig \hat{Y}_J}\Bigr)
	= 
	r^{-|\Powerset_{<k}(k)|}\hspace*{-10pt}
	\sum_{\alpha\colon \Powerset_{<k}(k) \to [r]}
	\hspace*{-3pt}
	\EntropyFinBig{E^W_J \in \cdot \givenBig 
	\hspace*{-5pt}\bigwedge_{F \in \Powerset_{<k}(k)}
	\hspace*{-8pt}
	Y_{\tau_J(F)} = \alpha(F)}.
\end{align*}
Hence 
\begin{align*}
	\Exp\Bigl(\RandomEntropyFinbig{E^W_J \givenbig \<Y_I\>_{I \in \Powerset_{<k}(n)}}\Bigr) 
\ &=\ r^{-|\Powerset_{<k}(k)|} \cdot \sum_{\alpha\colon\Powerset_{<k}(k) \to [r]} \EntropyFin{w_r(\alpha)} \\
\ &=\  \int \EntropyFinbig{W_r(\hat{z}_{[k]})}\ d\lambda(\hat{z}_{[k]}).
\end{align*}
But once again by (an extension of) Lemma~\ref{joint-entropy-less-than-sum-entropy}, 
we have
\begin{align*}
	\Exp\Bigl(\RandomEntropyFinbig{G(n, W) \given \<Y_I\>_{I \in \Powerset_{<k}(n)}}\Bigr) 
	\ &\leq\ \sum_{J \in \Powerset_k(n)}\Exp\Bigl(\RandomEntropyFinbig{E^W_J \givenbig 
	\<Y_I\>_{I \in \Powerset_{<k}(n)}}\Bigr) \\
\ &=\  |\Powerset_{k}(n)|\cdot \int \EntropyFin{W_r(\hat{z}_{[k]})}\ d\lambda(\hat{z}_{[k]}).
\end{align*}

Putting together our two calculations,
we obtain 
\[
\EntropyFin{G(n, W)} \ \leq\  |\Powerset_{<k}(n)| \cdot \log_2(r) + |\Powerset_{k}(n)| \cdot  \int \EntropyFin{W_r(\hat{z}_{[k]})}\ d\lambda(\hat{z}_{[k]}).
\]
Hence for each positive $r$, we have
\[
\frac{\EntropyFin{G(n, W)}}{|\Powerset_{k}(n)|} \ \leq\ \frac{|\Powerset_{<k}(n)|}{|\Powerset_k(n)|} \cdot \log_2(r) +  \int \EntropyFin{W_r(\hat{z}_{[k]})}\ d\lambda(\hat{z}_{[k]}),
\]
and so
\[
\limsup_{n \to \infty}\frac{\EntropyFin{G(n, W)}}{|\Powerset_{k}(n)|}
\ \leq\  \int \EntropyFin{W_r(\hat{z}_{[k]})}\ d\lambda(\hat{z}_{[k]}).
\]
Letting $r \to \infty$, recall that $W_r \to  W$ a.e., and so
by the dominated convergence theorem, we have
\[
\limsup_{n \to \infty}\frac{\EntropyFin{G(n, W)}}{|\Powerset_{k}(n)|} 
\ \leq\ \int \EntropyFin{W(\hat{z}_{[k]})}\ d\lambda(\hat{z}_{[k]}).
\]

Combining our lower and upper bounds, we have
\[
\lim_{n \to \infty} \frac{\EntropyFin{G(\Nats, W)}}{|\Powerset_{k}(n)|} 
= \int \EntropyFin{W(\hat{z}_{[k]})}\ d\lambda(\hat{z}_{[k]}),
\]
as desired.

Finally, because $\lim_{n \to \infty} \frac{|\Powerset_k(n)|}{n^k} = 1$, we have
\[\Entropy{G(\Nats, W)}(n) = 
Cn^k + o(n^k),\] where
$C = \int \EntropyFin{W(\hat{z}_{[k]     })}\ d\lambda(\hat{z}_{[k]})$. 
\end{proof}
%%%%%%% %%%%%%% %%%%%%% %%%%%%%

%%%%%%% %%%%%%% %%%%%%% %%%%%%%
As a corollary, when $L$ is finite, we obtain a bound on the growth of entropy functions of non-redundant invariant measures for $L$.

%%%%%%% %%%%%%% %%%%%%% %%%%%%%
\begin{corollary}
	\label{finite-relational-cor}
Suppose $L$ is finite, and
       let $\mu$ be a non-redundant invariant measure on
	$\Str_{\Lang}(\Nats)$.
	Then $\Entropy{\mu}(n) = Cn^k + o(n^k)$ for some constant $C$. 
	In particular, $\Entropy{\mu}(n) = O(n^k)$.
\end{corollary}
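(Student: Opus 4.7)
The plan is to combine the Aldous--Hoover--Kallenberg theorem (Theorem~\ref{main-hypergraphon-theorem}) with Theorem~\ref{Entropy-from-hypergraphon}. First, since $\mu$ is a non-redundant invariant measure on $\Str_\Lang(\Nats)$, Theorem~\ref{main-hypergraphon-theorem} provides an extended $\Lang$-hypergraphon $W$ such that $\mu$ is the distribution of $G(\Nats, W)$. Hence $\Entropy{\mu}(n) = \EntropyFin{G(n,W)}$ for all $n$.

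To apply Theorem~\ref{Entropy-from-hypergraphon}, I need to check that the constant $C \defas \int \EntropyFin{W(\hat{z}_{[k]})}\, d\lambda(\hat{z}_{[k]})$ is finite. This is where finiteness of $L$ enters: since $L$ consists of finitely many relation symbols, each of arity $k$, there are only finitely many atomic $L$-formulas whose free variables lie in $\{x_0, \dots, x_{k-1}\}$. Consequently the space $\qf_\Lang$ of complete non-redundant quantifier-free $L$-types in these variables is finite. For any probability measure $\nu$ on a finite set $S$, one has $\EntropyFin{\nu} \leq \log_2 |S|$, so in particular $\EntropyFin{W(\hat{z}_{[k]})} \leq \log_2 |\qf_\Lang|$ for all $\hat{z}_{[k]} \in [0,1]^{\Powerset_{<k}(k)}$. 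Integrating against the probability measure $\lambda$ gives $C \leq \log_2|\qf_\Lang| < \infty$.

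With finiteness of $C$ verified, Theorem~\ref{Entropy-from-hypergraphon} applies directly and yields $\Entropy{\mu}(n) = \Entropy{G(\Nats,W)}(n) = Cn^k + o(n^k)$, from which $\Entropy{\mu}(n) = O(n^k)$ is immediate. There is no real obstacle here; the only content beyond citing the two previous results is the elementary observation that a finite relational language of fixed arity gives rise to a finite type space, which caps the pointwise entropy of $W$ and hence the integral $C$.
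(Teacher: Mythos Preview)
Your proof is correct and follows essentially the same approach as the paper: invoke Theorem~\ref{main-hypergraphon-theorem} to obtain $W$, observe that finiteness of $L$ forces $\qf_\Lang$ to be finite so that $\EntropyFin{W(\hat z_{[k]})}$ is uniformly bounded and hence $C<\infty$, and then apply Theorem~\ref{Entropy-from-hypergraphon}. Your justification of the finiteness of $C$ is in fact slightly more explicit than the paper's, which simply says ``$W$ is bounded.''
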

%%%%%%% %%%%%%% %%%%%%% %%%%%%%
%%%%%%% %%%%%%% %%%%%%% %%%%%%%
\begin{proof}
	By Theorem~\ref{main-hypergraphon-theorem}
	there is some extended $L$-hypergraphon $W$ such that
	$\mu$ is the distribution of the random $L$-structure $G(\Nats, W)$.
Because $L$ is finite, $W$ is bounded, and so
$\int \EntropyFin{W(\hat{z}_{[k]})}\ d\lambda(\hat{z}_{[k]})$
is finite.
       Hence the entropy function $\Entropy{\mu}$ is of the desired form
	by Theorem~\ref{Entropy-from-hypergraphon}.
\end{proof}
%%%%%%% %%%%%%% %%%%%%% %%%%%%%

%%%%  %%%%  %%%%  %%%%  %%%%  %%%%  %%%%  %%%%  
\section{Invariant measures sampled from a Borel hypergraph}
\label{randomfree-sec}
%%%%  %%%%  %%%%  %%%%  %%%%  %%%%  %%%%  %%%%  

We have seen in Corollary~\ref{finite-relational-cor}
that for a finite relational language $\Lang$ all of whose relation symbols have the same arity $k \ge 1$,
the entropy function of an invariant measure on $\Str_{\Lang}(\Nats)$ is of the form $Cn^k + o(n^k)$, where 
$C$ is a constant depending on the invariant measure. 
In this section we consider the situation where $C=0$.

For $k=1$,
consider an
invariant measure $\mu$ on $\Str_{\Lang}$, 
and let $W$ be an extended $L$-hypergraphon such that $\mu$ is the distribution of $G(\Nats, W)$.
	By Theorem~\ref{Entropy-from-hypergraphon}, we have 
$\Entropy{\mu}(n) = C n + o(n)$,
where $C = \int \EntropyFin{W(z_{\emptyset})}\ d\lambda(z_{\emptyset})$.
	Suppose $C=0$. Then $h(W(z_\emptyset)) = 0$ for a.e.\ $z_\emptyset$,
	and so $W(z_\emptyset)$ is a point mass a.e.
	Hence $G(\Nats, W)$ is a random $L$-structure where every element of $\Nats$ has the same quantifier-free $1$-type, a.s.
	Therefore $\mu$ is a mixture of finitely many point masses,
	and $\Entropy{\mu}(n)$ is a constant that does not depend on $n$.
In summary, for $k=1$,
the only possible entropy functions of sublinear growth are the constant functions.

Theorem~\ref{Random-free has arbitrary growth rates},
the main result of this section,
states that in contrast, for $k>1$,
Corollary~\ref{finite-relational-cor}
is tight
in the sense that
	for any given function $\gamma$ that is $o(n^k)$, 
there is some non-redundant invariant measure whose entropy function 
is $o(n^k)$ but
grows faster than $\gamma$.
As discussed in the introduction, 
this result
is a generalization of
\cite[Theorem~1.1]{MR3073488}, and the arguments in this section 
closely follow their proof.

\emph{For the rest of this section, fix $k \ge 2$
and let $\Lang$
be the language of $k$-uniform hypergraphs, i.e., 
$\Lang = \{E\}$ 
where $E$ is a $k$-ary relation symbol.}

A \emph{$k$-uniform hypergraph}
is a non-redundant $\Lang$-structure satisfying 
\[
\bigwedge_{\sigma\in \Sym{k}} (\forall x_0, \dots, x_{k-1})\ \big(E(x_0, \dots, x_{k-1}) \leftrightarrow E(x_{\sigma(0)}, \dots, x_{\sigma(k-1)})\big),
\]
and we call the instantiation of $E$ its \emph{edge set}. 

By a \emph{Borel hypergraph}, we mean a $k$-uniform hypergraph $\cM$ whose underlying set is $[0,1]$ and such that for any atomic formula $\varphi$ in the language of hypergraphs, 
the set $\{\a \in \cM\st \cM \models \varphi(\a)\}$ of realizations of $\varphi$ in $\cM$ is Borel. 

Any extended $L$-hypergraphon $W$ \emph{yields} a non-redundant invariant measure, namely the distribution of the random $L$-structure $G(\Nats, W)$.
We say that 
	$W$ \emph{induces a Borel hypergraph} when this invariant measure can be obtained by sampling a random subhypergraph of some Borel hypergraph, as we make precise in Definition~\ref{def-induces-a-borel-hypergraph}.
We will see, in
Lemma~\ref{random-free-has-leading-entropy-zero},
that when $W$ induces a Borel hypergraph, it yields an invariant measure whose entropy function is $o(n^k)$.
The main construction of this section,
in Theorem~\ref{Random-free has arbitrary growth rates},
builds non-redundant invariant measures whose entropy functions have arbitrarily high growth within $o(n^k)$
by sampling from certain extended $L$-hypergraphons that induce Borel hypergraphs.

Observe that there are only two non-redundant quantifier-free $k$-types in $L$ that are consistent with the theory of $k$-uniform hypergraphs. 
Let $u_\top, u_\bot \in \qf_\Lang$ be
the unique non-redundant quantifier-free types
containing 
\[
	\bigwedge_{\sigma \in \Sym{k}} E(x_{\sigma(0)}, \dots, x_{\sigma(k-1)}) 
	\]
	and 
\[
	\bigwedge_{\sigma \in \Sym{k}} \neg E(x_{\sigma(0)}, \dots, x_{\sigma(k-1)}),
	\]
respectively, and let $\delta_\top, \delta_\bot \in \ProbMeas(\qf_\Lang)$ be the respective point masses concentrated on them.

Recall from Definition~\ref{hat-def}
	that $\hat{x}_{[k]}$ denotes the tuple of variables $\<x_I\>_{I \in \Powerset_{<k}(k)}$.

\begin{definition}
	\label{def-induces-a-borel-hypergraph}
\quad We say that an extended $\Lang$-hypergraphon \linebreak
	$W\colon[0,1]^{\Powerset_{<k}(k)} \to \ProbMeas(\qf_\Lang)$ \defn{induces a Borel hypergraph} 
if 
\begin{itemize}
\item for a.e.\ pair of sequences $\hat{x}_{[k]}$, $\hat{y}_{[k]}$ of elements of $[0,1]$ with $x_{\{i\}} = y_{\{i\}}$ for all $i\in[k]$, we have 
	$W(\hat{x}_{[k]}) = W(\hat{y}_{[k]})$, and 

\item for a.e.\ sequence $\hat{x}_{[k]}$ of elements of $[0,1]$, the distribution 
	$W(\hat{x}_{[k]})$ is either $\delta_\top$ or $\delta_\bot$. 
\end{itemize}
\end{definition}

It follows that an extended $\Lang$-hypergraphon $W$ induces a Borel hypergraph precisely when there is a Borel hypergraph $\cB$
such that 
$W(\hat{x}_{[k]})$ is a point mass concentrated on the 
quantifier-free type of $\<x_{\{0\}}, \dots, x_{\{k-1\}}\>$ in $\cB$.
In this case we say that $W$ \emph{induces the Borel hypergraph} $\cB$.

The notion of an extended $L$-hypergraphon inducing a Borel hypergraph is closely related, in the case $k=2$, to that of a graphon being random-free.
A \emph{graphon} is a symmetric Borel function from $[0,1]^2$ to $[0,1]$; as described in 
\cite{MR2815610} and \cite{MR3043217},
it is called
\emph{random-free} 
when it is $\{0,1\}$-valued a.e. 
Every graphon gives rise
to a random undirected graph on $\Nats$ whose distribution is an invariant measure.
For $k=2$,
an extended $L$-hypergraphon $W$ that yields
an invariant measure concentrated on undirected graphs 
can be expressed as a mixture of invariant measures, each obtained via a graphon.
In the case where such a $W$ corresponds to a single graphon, $W$ induces a Borel hypergraph precisely when the corresponding graphon is random-free.

	The notion of a random-free graphon also essentially appeared, in the context of separate exchangeability, in work of Aldous in \cite[Proposition~3.6]{MR637937} and \cite[(14.15) and p.~133]{MR883646},
and Diaconis--Freedman \cite[(4.10)]{MR640207}.
Further, Kallenberg \cite{MR1702867} describes, for all $k\ge2$, the similar notion of a \emph{simple array}; this corresponds to our notion of inducing a Borel hypergraph, in the case where the distribution of the simple array is ergodic.
Random-free graphons arise as well 
in \cite{MR2724668} and \cite[\S6.1]{MR3515800},
which consider invariant measures that are concentrated on a given orbit of the logic action.

		Theorem~\ref{Entropy-from-hypergraphon} implies that any extended $L$-hypergraphon $W$ yields
	an invariant measure whose
entropy function is $O(n^k)$.
Observe that there are extended $L$-hypergraphons achieving this upper bound, i.e., that yield an invariant measure whose entropy function is $\Omega(n^k)$.
For example, one can directly calculate that the \ER\ extended $L$-hypergraphon given by the constant function
\[
	W_\mathrm{ER}(\hat{x}_{[k]}) = \mathrm{Uniform}(\{u_{\top}, u_\bot\})
\]
satisfies
$\Entropy{G(\Nats, W_\mathrm{ER})}(n) = {n\choose k}$.

	However, we now show that this growth rate cannot be achieved for an extended $L$-hypergraphon that induces a Borel hypergraph.

%%%%% %%%%%
\begin{lemma}
	\label{random-free-has-leading-entropy-zero}
	Let $W$ be an extended $L$-hypergraphon,
	and suppose $W$ induces a Borel hypergraph. 
	Then $\Entropy{G(\Nats, W)}(n) = o(n^k)$.
\end{lemma}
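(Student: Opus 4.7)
The plan is to apply Theorem~\ref{Entropy-from-hypergraphon} directly, once we observe that the hypothesis of inducing a Borel hypergraph forces the integrand in that theorem to vanish almost everywhere.

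First, I would unpack what the hypothesis gives us: by Definition~\ref{def-induces-a-borel-hypergraph}, for a.e.\ $\hat{z}_{[k]} \in [0,1]^{\Powerset_{<k}(k)}$, the measure $W(\hat{z}_{[k]}) \in \ProbMeas(\qf_\Lang)$ is either $\delta_\top$ or $\delta_\bot$. Since both $\delta_\top$ and $\delta_\bot$ are point masses on the standard Borel space $\qf_\Lang$, each has a single atom of measure $1$, and hence, using the convention $-0\log_2 0 = 0$ from Definition~\ref{entropy-definition-finite-set}, we have $\EntropyFin{\delta_\top} = \EntropyFin{\delta_\bot} = 0$. Therefore $\EntropyFin{W(\hat{z}_{[k]})} = 0$ for a.e.\ $\hat{z}_{[k]}$.

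Next, the constant
\[
C \defas \int \EntropyFin{W(\hat{z}_{[k]})} \, d\lambda(\hat{z}_{[k]})
\]
is thus equal to $0$, which is finite, so the hypothesis of Theorem~\ref{Entropy-from-hypergraphon} is met. Applying that theorem yields
\[
\Entropy{G(\Nats,W)}(n) = C\, n^k + o(n^k) = o(n^k),
\]
which is the desired conclusion.

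There is essentially no obstacle: once Theorem~\ref{Entropy-from-hypergraphon} is in hand, the content of this lemma is just the observation that inducing a Borel hypergraph is exactly the condition that makes the fiberwise entropy vanish a.e., collapsing the leading $n^k$ term. The work of the section has already been done in establishing the representation result and the asymptotic formula; this lemma is the first, easy consequence in the regime where the random hypergraph is ``deterministic up to the underlying randomness of the vertex labels.''
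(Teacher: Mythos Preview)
Your proof is correct and follows essentially the same approach as the paper: observe that $W$ takes values in $\{\delta_\top,\delta_\bot\}$ a.e., so the integrand $\EntropyFin{W(\hat z_{[k]})}$ vanishes a.e., whence $C=0$ and Theorem~\ref{Entropy-from-hypergraphon} applies to give $o(n^k)$.
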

%%%%% %%%%%
\begin{proof}
	Because $W$ induces a Borel hypergraph, it takes the value
	$\delta_\top$ or $\delta_\bot$ a.e.
	But 
	$\EntropyFin{\delta_{\top}} = \EntropyFin{\delta_{\bot}} = 0$,
	and so
$\int \EntropyFin{W(\hat{z}_{[k]})}\ d\lambda(\hat{z}_{[k]}) =0$.
Hence by Theorem~\ref{Entropy-from-hypergraphon}, we have
	$\Entropy{G(\Nats, W)}(n) = o(n^k)$.
\end{proof}
%%%%% %%%%%

As noted previously,
Aldous \cite{MR883646}
and Janson \cite{MR3043217}
have versions of Theorem~\ref{Entropy-from-hypergraphon} for $k=2$. 
They also observe  
that their respective results immediately imply that a random-free graphon yields an invariant measure
with entropy function that is $o(n^2)$; their proofs are similar to that of
	Lemma~\ref{random-free-has-leading-entropy-zero}.
Their setting involves working with graphons,
which yield ergodic invariant measures.
Under the restriction of ergodicity,
it is easily seen that the converse of Lemma~\ref{random-free-has-leading-entropy-zero} holds for $k=2$, 
as 
noted by Janson \cite[Theorem~10.16]{MR3043217}.

In contrast, the converse of Lemma~\ref{random-free-has-leading-entropy-zero} itself does not hold, as
our notion of extended $L$-hypergraphon allows for ones that yield non-ergodic invariant measures.
For example, consider
the extended $L$-hypergraphon
\[
	\Wsqsq  (\hat{x}_{[k]}) = 
\begin{cases}
\delta_{\top}& \text{ if } x_\emptyset < \frac12,\\
\delta_{\bot}& \text{ otherwise}.
\end{cases}
	\]
	Define the extended $L$-hypergraphons
	$\Wbsq  (\hat{x}_{[k]}) = \delta_{\top}$ and
	$\Wwsq  (\hat{x}_{[k]}) = \delta_{\bot}$,
	which each induce a Borel hypergraph.
The random hypergraph $G(\Nats, \Wsqsq)$ is the complete hypergraph or the empty hypergraph, each with probability $\frac12$, and so its distribution
is a non-trivial mixture of the distributions of $G(\Nats, \Wbsq)$ and $G(\Nats, \Wwsq)$,
hence
		a non-ergodic invariant measure.
The extended $L$-hypergraphon $\Wsqsq$ does not induce a Borel hypergraph as it depends on the variable $x_\emptyset$, yet the entropy function of $G(\Nats, \Wsqsq)$
is $o(n^k)$. 

But in fact, for $k\ge3$, there is a more interesting obstruction to a converse
of Lemma~\ref{random-free-has-leading-entropy-zero},
even among extended $L$-hypergraphons that yield ergodic invariant measures.
The following example for $k=3$ (which is easily generalized to larger values of $k$)
	makes fundamental use of the variables indexed by pairs from $[3]$, and yet also yields an invariant measure whose entropy function is $o(n^3)$:
\[
	W_{\!\triangle}(\hat{x}_{[3]}) = 
\begin{cases}
	\delta_{\top}& \text{ if } x_{\{0,1\}}< \frac12 \text{ and }
	x_{\{0,2\}}< \frac12 \text{ and }
	x_{\{1,2\}}< \frac12,
	\\
\delta_{\bot}& \text{ otherwise}.
\end{cases}
\]
The random hypergraph $G(\Nats,W_{\!\triangle})$ can be thought of as first building a ``virtual'' \ER\ graph with independent $2$-edge probabilities $\frac12$, then adding a $3$-edge for each triangle existing in the graph, and then throwing away the virtual $2$-edges.
(For more about this example, see \cite[p.~92]{MR2426176} and \cite[Example~23.11]{MR3012035}.)

In Lemma~\ref{random-free-has-leading-entropy-zero},
we established
that any extended $L$-hypergraphon $W$ that induces a Borel hypergraph 
is such that
the entropy function of $G(\Nats,W)$ is $o(n^k)$.
We now proceed to show that there are such $W$ for which
the growth of
$\Entropy{G(\Nats,W)}(n)$
is arbitrarily close to $n^k$.

%%%%%%%%%%%%%%%%%%%%%%%%%%%%%%%%%%%%%%

We first define a kind of ``blow up''
that creates an extended $L$-hypergraphon from a countably infinite $L$-structure. We will use this
notion in Lemma~\ref{Bound on conditional entropy}.
	Blow ups are a standard technique for expanding a countable structure into 
a continuum-sized structure that has a positive-measure worth of ``copies'' of each point from the original.
(See, e.g., the use of step function graphons in \cite{MR3012035}.)

%%%%% %%%%%
\begin{definition}
	\label{piblowup-def}
		Let $\cM \in \Str_{\Lang}(\Nats)$
	and let $\pi\colon [0,1]\to \Nats$ be a Borel map such that $\lambda(\pi^{-1}(i)) > 0$ for all $i\in\Nats$.
		The \defn{$\pi$-blow up of $\cM$} is defined to be the extended $L$-hypergraphon
$W\colon[0,1]^{\Powerset_{<k}(k)} \to \{\delta_{\top}, \delta_{\bot}\}$ 
given by
\[
	W(\hat{x}_{[k]}) =
\begin{cases}
\delta_{\top}& \text{ if } 
			\cM \models E\bigl(\pi(x_{\{0\}}), \pi(x_{\{1\}}), \dots, \pi(x_{\{k-1\}})\bigr),\\
\delta_{\bot}& \text{ otherwise}.
\end{cases}
\]
\end{definition}
%%%%% %%%%%

Observe that such a $W$ induces a Borel hypergraph, as on every input it outputs either the value $\delta_\top$ or $\delta_\bot$,
and it depends only on the variables
			$x_{\{0\}}, x_{\{1\}}, \dots, x_{\{k-1\}}$.
		In particular, $W$ induces the Borel hypergraph $\cB$ with underlying set $[0,1]$ given by:
\[
		\cB \models E(x_0, x_1,\dots, x_{k-1})
		\]
if and only if
		\[
			\cM \models E\bigl(\pi(x_0), \pi(x_1), \dots, \pi(x_{k-1})\bigr).
		\]

		%%%%% %%%%%
Hence we can think of
$\pi^{-1}$ as a Borel partition of the unit interval into positive measure pieces,
	such that each
	element of $\cM$ 
	is ``blown up'' 
	into a piece of the partition in $\cB$, and each piece of the partition arises in this way.

Next, we proceed to construct an extended $L$-hypergraphon $W$ that yields an invariant measure whose entropy function has the desired growth. 
	Analogously to Hatami--Norine \cite{MR3073488}, 
we let $W$ be the blow up of a particular countably infinite structure, the \emph{Rado $k$-hypergraph}, a well-known generalization of the \emph{transversal-uniform graph} used in \cite{MR3073488}.
	The Rado $k$-hypergraph is the countable homogeneous-universal $k$-uniform hypergraph,
	namely,
the unique (up to isomorphism) countable $k$-uniform hypergraph satisfying 
the so-called ``Alice's restaurant'' axioms. These axioms state that for any 
possible way of extending a finite induced subhypergraph of the Rado $k$-hypergraph
by one vertex to obtain a $k$-uniform hypergraph, there is some element of the Rado $k$-hypergraph
that realizes this extension.

We now describe an inductive construction of an instantiation $\Rado_k$ of the Rado $k$-hypergraph, with underlying set $\Nats$.
	At each stage $\ell\in\Nats$ we define a finite set 
	$\Alice_\ell$
	of new vertices,
	which we call \emph{generation} $\ell$,
	and build a $k$-uniform hypergraph $G_\ell$ with underlying set $V_\ell \defas \bigcup_{j\le \ell} \Alice_j$.

	Stage $0$: Let $A_0 \defas \{0\}$ consist of a single vertex, and let $G_0$ be the 
	empty hypergraph with vertex set $V_0 = \Alice_0$.

		Stage $\ell>0$: 
Let $\Alice_\ell$ 
consist of one new vertex $a_X$ for each subset $X$ of unordered $(k-1)$-tuples from $V_{\ell-1}$,
with the elements of $\Alice_\ell$
	chosen to be the consecutive least elements of $\Nats$ not yet used.
Let $G_\ell$ be the $k$-uniform hypergraph on vertex set $V_\ell = V_{\ell-1} \cup \Alice_\ell$ 
whose edges are those of $G_{\ell-1}$ along with, for each such $X$ and every unordered tuple $d\in X$, an edge consisting of $a_X$ and the $k-1$ vertices in $d$.

Define $\Rado_k$ to be the union of the hypergraphs $G_\ell$, i.e., the hypergraph with vertex set $\bigcup_{\ell\in\Nats} V_\ell$ and edge set $\bigcup_{\ell\in\Nats} E^{G_\ell}$.
	Observe that $\Rado_k$ is a $k$-uniform hypergraph with underlying set $\Nats$ that
	satisfies
the Alice's restaurant axioms: Given 
a finite induced subhypergraph $D$ of $\Rado_k$,
all one-vertex extensions of $D$ are realized in  
stage $\ell+1$, for any $\ell$ such that $V_\ell$ contains the 
vertices of $D$.

The following lemma, which we will use
in the proof of Theorem~\ref{Random-free has arbitrary growth rates},
provides a lower bound on the entropy function of a random hypergraph sampled from a blow up of $\Rado_k$ in the case where elements of $\Rado_k$ belonging to the same generation get blown up to sets of equal measure.
Both the statement and proof of the lemma
	are directly analogous to those of \cite[Lemma~2.1]{MR3073488}.

%%%%%%%%%%%%%%%%%
Recall from Definition~\ref{zeta-def} that
$\<\zeta_D\>_{D\in \Powerset_{\le k}(\Nats)}$ is the collection of i.i.d.\ uniform random variables in $[0,1]$ in terms of which the random $L$-structure $G(\Nats,W)$ is defined.

%%%%%%%%%% %%%%%%%%%% %%%%%%%%%% %%%%%%%%%%
\begin{lemma}
\label{Bound on conditional entropy}
	Let $\pi\colon [0,1]\to \Nats$ be a Borel map such that $\lambda(\pi^{-1}(i)) > 0$ for all $i\in\Nats$, and
	let $W$ be a $\pi$-blow up of $\Rado_k$.
	Suppose that $\lambda(\pi^{-1}(a)) = \lambda(\pi^{-1}(b))$ for all $\ell\in\Nats$ and $a,b\in \Alice_\ell$.
Then for all $n \in \Nats$ and $\rho\colon[n] \to \Nats$, we have 
\[
	\RandomEntropyFinBig{G(n, W)
		\givenBig \bigwedge_{j \in [n]}  \pi(\zeta_{\{j\}}) \in \Alice_{\rho(j)}} 
	\ \geq \ {\bigl|\rho([n])\bigr| \choose {k}},
\]
a.s.
\end{lemma}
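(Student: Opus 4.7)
The plan is to exploit the self-similar structure of $\Rado_k$: conditional on $E_\rho \defas \bigwedge_{j\in[n]} \pi(\zeta_{\{j\}}) \in \Alice_{\rho(j)}$, a suitable set of representative vertices will yield $\binom{|\rho([n])|}{k}$ essentially independent bits of entropy.

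Write $U_j \defas \pi(\zeta_{\{j\}})$. By the equal-measure hypothesis on $\pi$ together with the independence of the $\zeta_{\{j\}}$, conditional on $E_\rho$ the $U_j$ are independent with $U_j$ uniform on $\Alice_{\rho(j)}$. Fix a section $s\colon \rho([n]) \to [n]$ of $\rho$, set $m \defas |\rho([n])|$, and list $s(\rho([n]))$ as $v_0, \ldots, v_{m-1}$ with $\rho(v_0) < \cdots < \rho(v_{m-1})$. Let $\mathcal{E}$ denote the family of edge indicators of $G(n,W)$ on the $\binom{m}{k}$ $k$-subsets of $\{v_0,\ldots,v_{m-1}\}$. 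Since $\mathcal{E}$ is a measurable function of $G(n,W)$, Lemma~\ref{chainrule} applied under the conditional measure $\Pr(\pars \,|\, E_\rho)$ yields
\[
H\bigl(G(n,W) \givenbig E_\rho\bigr) \ \geq \ H\bigl(\mathcal{E} \givenbig E_\rho\bigr).
\]

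The key combinatorial ingredient comes from the construction of $\Rado_k$: each vertex $a \in \Alice_\ell$ is indexed by a subset $X_a \subseteq \binom{V_{\ell-1}}{k-1}$, and the $k$-edges of $\Rado_k$ with top-generation vertex $a$ are exactly the sets $\{a\} \cup d$ for $d \in X_a$. In particular, since $U_{v_i}$ is uniform on $\Alice_{\rho(v_i)}$ under $E_\rho$, the subset $X_{U_{v_i}}$ is uniform on subsets of $\binom{V_{\rho(v_i)-1}}{k-1}$. Grouping the indicators in $\mathcal{E}$ by the top vertex $v_i = \max T$ and iterating the chain rule, and then using Lemma~\ref{cond-reduces-entropy} to replace the conditioning on previously-revealed edges (which are deterministic functions of $U_{v_0},\ldots,U_{v_{i-1}}$) by the finer conditioning on these $U$'s themselves, one arrives at
\[
H\bigl(\mathcal{E} \givenbig E_\rho\bigr) \ \geq \ \sum_{i=k-1}^{m-1}\Exp\bigl(H(\mathcal{E}_i \givenbig U_{v_0},\ldots,U_{v_{i-1}}, E_\rho)\bigr),
\]
where $\mathcal{E}_i$ is the subfamily of edges with top vertex $v_i$. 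Given the conditioning $U_{v_0},\ldots,U_{v_{i-1}}, E_\rho$, the $U_{v_j}$ for $j<i$ are pairwise-distinct elements of $V_{\rho(v_i)-1}$ (as they lie in pairwise-disjoint $\Alice_\ell$'s), so the $\binom{i}{k-1}$ indicators in $\mathcal{E}_i$ evaluate $X_{U_{v_i}}$ at $\binom{i}{k-1}$ distinct $(k-1)$-subsets of $V_{\rho(v_i)-1}$. Since $X_{U_{v_i}}$ is a uniform random subset, these evaluations are i.i.d.\ fair coin flips, contributing exactly $\binom{i}{k-1}$ bits. The hockey-stick identity $\sum_{i=k-1}^{m-1}\binom{i}{k-1} = \binom{m}{k}$ then gives the desired bound.

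The main bookkeeping obstacle is the decoupling step: conditional on $E_\rho$ and the lower-generation labels $U_{v_0},\ldots,U_{v_{i-1}}$, the indicators in $\mathcal{E}_i$ factor into independent fair coins. This ultimately rests on the bijection $\Alice_\ell \leftrightarrow 2^{\binom{V_{\ell-1}}{k-1}}$ built into the Rado construction, together with the pairwise disjointness of the $\Alice_\ell$, which ensures that the $(k-1)$-subsets being evaluated are distinct, so the corresponding membership bits of the uniform random set $X_{U_{v_i}}$ are mutually independent.
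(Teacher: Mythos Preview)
Your proposal is correct and follows essentially the same route as the paper's proof. Both arguments select one representative vertex per value in $\rho([n])$, order them by generation, and exploit the bijection $\Alice_\ell \leftrightarrow 2^{\binom{V_{\ell-1}}{k-1}}$ to see that, conditionally, the edges with a given top representative and lower representatives are independent fair coins; the paper packages this as ``the conditional law of the induced subhypergraph on the representatives is uniform,'' while you compute the same $\binom{m}{k}$ via the chain rule and the hockey-stick identity $\sum_{i=k-1}^{m-1}\binom{i}{k-1}=\binom{m}{k}$.
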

%%%%%%%%%% %%%%%%%%%% %%%%%%%%%% %%%%%%%%%%
\begin{proof}
Let $S\subseteq [n]$ be maximal such that $\rho$ is injective on $S$,
	and write
	$G(S,W)$ to denote the random induced substructure of $G(n, W)$ with underlying set $S$.
Then	
\begin{align*}
	\RandomEntropyFinBig{
		G(n, W)
		\givenBig  \bigwedge_{j \in [n]}  \pi(\zeta_{\{j\}}) \in \Alice_{\rho(j)}} 
	\ \geq\  \RandomEntropyFinBig{
		G(S, W)
		\givenBig  \bigwedge_{j \in S}  \pi(\zeta_{\{j\}}) \in \Alice_{\rho(j)}}.
\end{align*}

	Consider
	the random measure
	\begin{align*}
		\label{condprob-uniform}
\tag{$\dagger$}
	\Exp\Big(
		G(S, W)
		\givenBig \bigwedge_{j \in S}  \pi(\zeta_{\{j\}}) \in \Alice_{\rho(j)}\Big).
	\end{align*}
	Since $W$ is a blow up of $\Rado_k$, 
	the random hypergraph $G(S,W)$ is a $|S|$-element sample with replacement from $\Rado_k$, with the vertices relabeled by $S$.
	By the injectivity of $\rho$, the condition in \eqref{condprob-uniform} constrains the elements of $G(S,W)$ to be obtained from distinct generations
	of $\Rado_k$, which implies that the random distribution on hypergraphs given by
		\eqref{condprob-uniform}
	is actually sampled from $\Rado_k$ without replacement a.s.

	Further, each $\zeta_{\{j\}}$ is uniform, and $\pi^{-1}$ assigns sets of equal measure to vertices in $\Rado_k$ of the same generation. So
		\eqref{condprob-uniform}
	is a.s.\ the distribution of the random hypergraph $Q$ with underlying set $S$ obtained by, for each $j\in S$, uniformly selecting a vertex of $\Rado_k$ from among those 
	in generation $\rho(j)$, and taking the edges induced from $\Rado_k$.

	Let $\ell \in \Nats$, and consider a subset $U\subseteq V_\ell$.
	Write $T$ for the set of unordered $(k-1)$-tuples from $U$.
	Suppose $\ell' > \ell$.
	For every
	subset $X \subseteq T$,
	exactly a $2^{-|T|}$-fraction of the vertices in
	$A_{\ell'}$ form an edge with every $(k-1)$-tuple in $X$ and with no $(k-1)$-tuple in $T \setminus X$.
	For a vertex $v$ selected uniformly at random from $A_{\ell'}$, let $G_{U, v}$ be
	the 
	(not necessarily induced) random
	subhypergraph of $G_{\ell'}$ 
	that has vertex set
	$U \cup \{v\}$ and whose edges are those in $G_{\ell'}$ that consist of $v$ along with
	$k-1$ vertices from $U$.
	Then $G_{U,v}$
	is equally likely to be 
	any of the hypergraphs with vertex set 
	$U \cup \{v\}$ whose edges all include $v$.

	For each $j\in S$, define $S_j \defas \{i \in S \st \rho(i) < \rho(j)\}$, and
	let $Q_j$ be the (not necessarily induced) random subhypergraph of $Q$
	that has vertex set $S_j \cup \{j\}$ and whose edges are those in $Q$ that consist of $j$ along with $k-1$ vertices from $S_j$.
	Then for each $j\in S$, 
	the random hypergraph $Q_j$
	is equally likely to be any given hypergraph $F_j$ with vertex set $S_j \cup \{j\}$ all of whose edges include $j$. 
	Further, for
	any choice of hypergraphs $\{F_j\st j\in S\}$ 
	as above,
	the events 
	$Q_j = F_j$ for $j\in S$
	are independent because the random variables $\zeta_{\{j\}}$ are independent.

	Now, for any hypergraph $F$ with underlying set $S$, we can write 
	$F$ as the union,
	over $j\in S$, of the
	subhypergraph of $F$
	that has vertex set $S_j \cup \{j\}$ and whose edges are those in $F$ consisting of $j$ along with $k-1$ vertices from $S_j$.
	We therefore see that $Q$ is equally likely to be any
	hypergraph on vertex set $S$.

	In summary, the 
	random distribution \eqref{condprob-uniform}
	is a.s.\ the uniform measure on
	$k$-uniform hypergraphs with underlying set $S$.
Hence	
	\[
	\RandomEntropyFinBig{
		G(S, W)
		 \givenBig  \bigwedge_{j \in S}  \pi(\zeta_{\{j\}}) \in \Alice_{\rho(j)}} \ =\  {{|S|}\choose {k}}
\]
a.s.,
establishing the lemma.
\end{proof}
%%%%%%%%%% %%%%%%%%%% %%%%%%%%%% %%%%%%%%%%

We now prove the main result of this section, 
which asserts that the
entropy function of an invariant measure 
can have arbitrarily large growth rate within $o(n^k)$.
This
result is a higher-arity version of 
\cite[Theorem~1.1]{MR3073488},
and its proof proceeds via the same steps. We include the proof here (with appropriately modified parameters and notation) for completeness.

%%%%%%%%%% %%%%%%%%%% %%%%%%%%%% %%%%%%%%%%
\begin{theorem}
\label{Random-free has arbitrary growth rates}
Suppose $\gamma\colon \Nats \to [0,1]$ is a function such that $\lim_{n\to \infty} \gamma(n) = 0$. 
	Then there is an extended $L$-hypergraphon $W$ that induces a Borel hypergraph and is such that 
	$\Entropy{G(\Nats, W)} (n) = o(n^k)$ and 
	$\Entropy{G(\Nats, W)} (n) = \Omega(\gamma(n) \cdot n^k)$.
\end{theorem}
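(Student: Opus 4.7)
The plan is to define $W$ as a $\pi$-blow up of the Rado $k$-hypergraph $\Rado_k$ in the sense of Definition~\ref{piblowup-def}, with $\pi$ chosen so that every vertex in generation $\Alice_\ell$ receives measure $p_\ell/|\Alice_\ell|$, where $\langle p_\ell\rangle_{\ell\in\Nats}$ is a sequence of positive reals summing to $1$ that we calibrate in terms of $\gamma$. Such a $W$ takes only the values $\delta_\top$ and $\delta_\bot$ and depends only on the singleton-indexed variables, hence it induces a Borel hypergraph, and so the upper bound $\Entropy{G(\Nats,W)}(n) = o(n^k)$ follows immediately from Lemma~\ref{random-free-has-leading-entropy-zero}.

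For the lower bound I would introduce, for each $j\in[n]$, the random variable $\rho(j)$ defined to be the unique $\ell$ with $\pi(\zeta_{\{j\}})\in \Alice_\ell$, and set $D_n \defas |\rho([n])|$. Because every vertex in a given generation has been assigned the same $\pi$-mass, Lemma~\ref{Bound on conditional entropy} applies, and combining it with Lemma~\ref{cond-reduces-entropy} yields
\[
\EntropyFin{G(n,W)} \ \geq\ \Exp\bigl(\RandomEntropyFin{G(n,W)\given \langle\rho(j)\rangle_{j\in[n]}}\bigr) \ \geq\ \Exp\biggl[\binom{D_n}{k}\biggr].
\]
Setting the target $a_n \defas \lceil n\,\gamma(n)^{1/k}\rceil$, it therefore suffices to arrange that $\Pr(D_n \geq a_n)$ is bounded below by a positive constant uniformly in $n$, since then
\[
\Exp\biggl[\binom{D_n}{k}\biggr] \ \geq\ \Pr(D_n \geq a_n)\cdot \binom{a_n}{k} \ =\ \Omega(\gamma(n)\,n^k).
\]

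The crux of the argument is the choice of $\langle p_\ell\rangle$. I would first pass from $\gamma$ to a regular dominating $\tilde\gamma\geq\gamma$ with $\tilde\gamma(n)\to 0$ monotonically and slowly enough that $a_n$ is eventually non-decreasing. Then I would build $\langle p_\ell\rangle$ block by block: along a rapidly growing sequence $n_1<n_2<\cdots$, devote a fresh block $B_r$ of approximately $a_{n_r}$ consecutive generations, each of equal mass, summing to $c_r$, with $\sum_r c_r = 1$. By choosing $n_{r+1}/n_r$ and $c_r$ appropriately, an elementary balls-in-bins tail estimate shows that for any $n\in[n_r,n_{r+1})$ the $n$ samples hit at least $a_n$ distinct generations inside $B_r$ with probability bounded away from $0$.

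The main obstacle is synchronizing the parameters $n_r$, $c_r$, and $|B_r|$ so that for every $n$ a single block dominates the sample and yields $\Omega(a_n)$ distinct hit generations with non-vanishing probability; this is where one would follow the analogous calibration in Hatami--Norine, adapted to the $k$-uniform setting. Once this combinatorial bookkeeping is done, the theorem reduces to the entropy bound of Lemma~\ref{Bound on conditional entropy} together with the arithmetic identity $\binom{a_n}{k} = \Theta(\gamma(n)\,n^k)$.
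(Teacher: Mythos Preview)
Your proposal is correct and follows essentially the same route as the paper: a $\pi$-blow up of $\Rado_k$ with equal mass on vertices within a generation, the reduction via Lemmas~\ref{cond-reduces-entropy} and~\ref{Bound on conditional entropy} to a lower bound on the number of distinct generations hit, and then a balls-in-bins tail estimate for that count. The paper carries out exactly the calibration you defer: it partitions the generations into blocks $\Gamma_r$ of explicit size $g_r$ (defined directly from $\gamma$), gives each generation in $\Gamma_r$ total mass $(g_r 2^r)^{-1}$, and for each $n$ picks the relevant block index $q$ and bounds $\Pr\bigl(|Z\cap\Gamma_q|\ge n\cdot 2^{-q-2}\bigr)\ge \tfrac12$ using Chebyshev with negative correlation; the threshold $n\cdot 2^{-q-2}$ plays the role of your $a_n$.
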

%%%%%%%%%% %%%%%%%%%% %%%%%%%%%% %%%%%%%%%%
\begin{proof}
	We will define a Borel map $\pi\colon [0,1]\to \Nats$ 
	satisfying $\lambda(\pi^{-1}(i)) > 0$ for all $i\in\Nats$  
	in such a way that
	the $\pi$-blow up of $\Rado_k$, which we denote by $W$,
	has the desired properties.

	By the observation that follows Definition~\ref{piblowup-def}, if
$\lambda(\pi^{-1}(i)) > 0$ for all $i\in\Nats$, then
the $\pi$-blow up
	$W$ induces a Borel hypergraph, and so 
	by Lemma~\ref{random-free-has-leading-entropy-zero}, we have
	$\Entropy{G(\Nats, W)} (n) = o(n^k)$.
	Hence it suffices to construct $\pi$ 
	satisfying $\lambda(\pi^{-1}(i)) > 0$ for all $i\in\Nats$ in such a way that 
	$\Entropy{G(\Nats, W)}(n) = $ \linebreak $\Omega(\gamma(n) \cdot n^k)$.

	For positive $r\in\Nats$, define
\[
	g_r \defas \max\bigl\{\{2^{r+3} k \} \cup \{n \in\Nats \st \gamma(n) > 2^{-(r+1)k-3k-1} k^{-k} \}\bigr\}.
	\]
	Note that $\lim_{n\to \infty} \gamma(n) = 0$ and so for each $r$ there are only finitely many $n$ such that 
	$\gamma(n) > 2^{-(r+1)k-3k-1} k^{-k}$;
hence $g_r$ is well-defined. 

	Observe that
	for all 
	$n\ge g_1 + 1$, the inequalities
\begin{align*}
	\label{ellstar}
\tag{$\star$}
	n > 2^{r+2} k
\qquad
\text{and}\qquad
\gamma(n) \le 2^{-r k-3k-1} k^{-k}
\end{align*}
hold when $r = 1$.
The remainder of the proof establishes that 
for all such $n$, we have
$\Entropy{G(\Nats, W)}(n) \ \geq \ n^k \cdot \gamma(n)$.

Fix $n\ge g_1+1$. We have seen that there is at least one
	$r$ satisfying \eqref{ellstar}; on the other hand,
	there are only finitely many 
	choices of $r$ for which \eqref{ellstar} holds.
Let $q$ be the largest such $r$.
Then either $n < 2^{q+3} k$ 
or \linebreak $\gamma(n) > 2^{-(q+1)k-3k-1} k^{-k}$, and so $n \le g_q$ by the definition of $g_q$.

For each $r\ge1$, define 
	\[\textstyle
		\Gamma_r \defas \bigl\{\ell\in\Nats \st
		\sum_{i = 1}^{r-1} g_i \le \ell <
	\sum_{i=1}^r g_i\bigr\},
	\]
	so that $\{\Gamma_r\}_{r\ge 1}$ is a partition of $\Nats$.
	For every $\ell\in \Gamma_r$,
	let $\alpha_\ell \defas \frac{1}{g_r2^r}$. 
	Observe that  for each $r\ge1$, we have
	$|\Gamma_r| = g_r$, and so
			$\sum_{\ell\in\Nats}\alpha_{\ell} = 1$.

As a consequence, there is a Borel map $\pi\colon [0,1]\to \Nats$ such that 
	for all $a\in\Rado_k$, we have
	\[
		\lambda(\pi^{-1}(a)) = \frac{\alpha_\ell}{|A_\ell|}
		\]
	where $\ell$ is such that $a\in A_\ell$.
	In other words, vertices in $\Rado_k$ of the same generation $A_\ell$
	are blown up to sets of the same positive measure, and the entire generation $A_\ell$ is blown up to a set of measure $\alpha_\ell$.
We may therefore apply Lemma~\ref{Bound on conditional entropy}.
Hence for any $\rho\colon[n] \to \Nats$, we have 
\[
	\RandomEntropyFinBig{
		G(n, W)
		\givenBig \bigwedge_{j \in [n]} \pi(\zeta_{\{j\}}) \in \Alice_{\rho(j)}} \geq {{|\rho([n])|}\choose {k}}
\]
a.s.

By Lemma~\ref{cond-reduces-entropy}, we have
\begin{align*}
\label{Random-free has arbitrary growth rates: Equation 1}
	\Entropy{G(\Nats, W)}(n) \ &\geq \  \Exp\Bigl(\RandomEntropyFinBig{G(n, W) \givenBig \bigvee_{\rho\colon[n] \to \Nats}
	\Bigl(\bigwedge_{j \in [n]} \pi(\zeta_{\{j\}}) \in \Alice_{\rho(j)}\Bigr) 
	\Bigr)
	}
\\	
	&\geq\sum_{\rho\colon[n] \to \Nats} \Pr
	\Bigl(\bigwedge_{j \in [n]} \pi(\zeta_{\{j\}}) \in \Alice_{\rho(j)}\Bigr) 
	\cdot 
	{{|\rho([n])|}\choose {k}}\\
\ &\geq \ 
	\Pr\Bigl(\bigl|Z|
	\geq n\cdot 2^{-q-2}\Bigr) \cdot 
{{\lceil n\cdot 2^{-q-2} \rceil}\choose {k}},
\tag{$\ddag$}
\end{align*}
where we define the random set $Z \defas \bigcup_{j\in [n]}\{\ell \in \Nats \st \pi(\zeta_{\{j\}}) \in \Alice_\ell \}$. 

Now define the random quantity 
	$X \defas |\{Z \cap \Gamma_q ]\}|$, and note that we always have $X \leq |Z|$. 
Because $\<\zeta_{\{j \}}\>_{j \in [n]}$ is an i.i.d.\ uniform sequence, we have
\begin{align*}
	\Exp(X)  \ &=  \sum_{\ell \in \Gamma_q} \Pr\Bigl(\bigvee_{j \in [n]} \pi(\zeta_{\{j \}}) \in \Alice_\ell\Bigr) \\
\ &=  \sum_{\ell \in  \Gamma_q} (1 - (1-\alpha_\ell)^n)\\
	\ &= g_q \cdot \left(1 - \left(1-\frac{1}{g_q 2^q}\right)^n\right).
\end{align*}

Observe that $(1- x)^n \ \leq\  1 - nx + n^2x^2 \ \leq\  1-\frac{nx}{2}$
holds for all
$x \in [0, \frac{1}{2n}]$.
Since $n \le g_q$ and $q\ge 1$, we have
$\frac{1}{g_q 2^q} \in [0, \frac{1}{2n}]$, and
so 
\[
1 - \left(1-\frac{1}{g_q 2^q}\right)^n \ \geq\  \frac{n}{g_q 2^{q+1}}.
\]
Putting these together, we get 
\[
\Exp(X) \ \geq\  n\cdot 2^{-q-1}.
\]
By Chebyshev's inequality, we have 
\[
	\Pr\left[|X - \Exp(X)|  \geq \frac{\Exp(X)}{2}\right]  \le
\frac{4 \Var(X)}{(\Exp(X))^2}.
\]
Hence,
\begin{align*}
1 - \frac{4 \Var(X)}{(\Exp(X))^2}
	\ &\le\ 
	 \Pr\left[|X - \Exp(X)| < \frac{\Exp(X)}{2}\right] \\
\ &\le\  \Pr\left[X > \frac{\Exp(X)}{2} \right]\\
\ &\le\  \Pr(X > n\cdot 2^{-q-2}) \\
\ &\le\  \Pr(|Z| > n\cdot 2^{-q-2}).
\end{align*}

For distinct $\ell, \ell' \in \Gamma_q$,
the events $\ell \in Z$ and $\ell' \in Z$ 
have negative correlation, 
which implies that
$\Var(X) \leq \Exp(X)$.
Hence
\[
\Pr(|Z| \ > \  n\cdot 2^{-q-2}) \ \geq\  1 - \frac{4}{\Exp(X)} \ \geq\  1 - \frac{4}{n\cdot 2^{-q-1}} \ \geq\  \frac{1}{2}.
\]

Finally, substituting in 
\eqref{Random-free has arbitrary growth rates: Equation 1} and recalling
that, by \eqref{ellstar} for $r = q$, we have $n \cdot 2^{-q-2} > k$ and $\gamma(n) \le 2^{-qk-3k-1} k^{-k}$, 
we obtain
\begin{align*}
	\Entropy{G(\Nats, W)}(n) \ &\geq\  \frac{1}{2} \cdot 
{{\lceil n\cdot 2^{-q-2} \rceil}\choose {k}} \\
	\ &\geq\  \frac{1}{2} \cdot \frac{(n\cdot 2^{-q-2} - k)^k}{k^k} \\
	\ &\geq\  \frac{1}{2} \cdot (n\cdot 2^{-q-3})^k k^{-k} \\
	\ &=\ n^k \cdot 2^{-qk-3k-1} k^{-k} \\
	\ &\geq\  n^k \cdot \gamma(n),
\end{align*}
as desired.
\end{proof}
%%%%%%%%%% %%%%%%%%%% %%%%%%%%%% %%%%%%%%%%

%%%%  %%%%  %%%%  %%%%  %%%%  %%%%  %%%%  %%%%  
\section{Non-redundant invariant measures}
\label{nonredundant-sec}
%%%%  %%%%  %%%%  %%%%  %%%%  %%%%  %%%%  %%%%  
In this section we consider entropy functions of invariant measures for countable languages
that may be of unbounded arity. 
If an invariant measure fails to be non-redundant, then its entropy function may take the value $\infty$ even when
there are only finitely many relation symbols of each arity in the language.
Hence we restrict to the case of 
non-redundant invariant measures 
for relational languages, and
provide an upper bound on the entropy function in terms of the number of relation symbols 
of each arity.

\emph{For the rest of this section, let $\Lang$ be a countable relational language (possibly infinite).}

We show, in Proposition~\ref{nonred-bound}, that
the entropy function of a non-redundant invariant measure for $\Lang$
is dominated by that of a particular random $\Lang$-structure that generalizes the \ER\ random graph having edge probability $\frac12$.
We also calculate, in Lemma~\ref{nonred-binomial},
the entropy function of such a maximal entropy structure
explicitly in terms of the number of relation symbols of each arity in $\Lang$.
In the case where $\Lang$ has finitely many relation symbols of each arity, this provides a more precise 
version of Lemma~\ref{non-redundant-makes-it-real-valued}, which states
that such an entropy function takes values in $\Reals$.
Moreover, this calculation shows that there are $\Reals$-valued entropy functions that grow arbitrarily fast, in contrast to the situation for finite languages, where the growth is at most polynomial.

We now define the 
\emph{uniform non-redundant measure} for $\Lang$, which is the distribution of a random structure obtained by independently flipping a fair coin to decide every relation on a tuple of distinct elements, and setting all relations on tuples with repeated elements to false. 

For a relation symbol $R\in\Lang$, write $\arity(R)$ to denote its arity.

\begin{definition}
Given a set $X$, 
define 
\[\NonRepeating_{\Lang, X} \defas \{ \<R, \xx\> \st
R \in \Lang,\  \xx\in X\text{ has distinct entries, and}\ |\xx| = \arity(R)
\}.
\]
Let $\{\xi_{R(\xx)}\st \<R, \xx\> \in \NonRepeating_{\Lang, \Nats}
\}$ be a collection of i.i.d.\ uniform $\{\top, \bot\}$-valued random variables and let $\Xi$ be the $\Str_\Lang(\Nats)$-valued random variable given by
\[
\Xi \models R(\xx)\qquad \text{if and only if} 
\qquad
	\<R, \xx\> \in \NonRepeating_{\Lang, \Nats} \quad \text{and} \quad
	\xi_{R(\xx)} = \top
\]
for every $R\in\Lang$ and $\arity(R)$-tuple $\xx$ of elements from $\Nats$.
Define  $\UniformMeasureLang$ to be the distribution of $\Xi$, and call it
the \defn{uniform non-redundant measure} for $\Lang$. 
\end{definition}

It is easy to see that $\UniformMeasureLang$ is both invariant and non-redundant. 
Note that for $n\in\Nats$, the measure $(\UniformMeasureLang)_n$ is the uniform distribution on non-redundant structures in $\Str_\Lang(n)$.

	Let $\ArityLang\colon \Nats\to\Nats\cup \{\infty\}$ be the function sending each $n\in\Nats$
to the number of relation symbols in $\Lang$ having arity $n$.
In the following lemma we calculate the entropy function of $\UniformMeasureLang$ in terms of the function $\ArityLang$.

%%%%%%%% %%%%%%%% %%%%%%%% %%%%%%%%
\begin{lemma}
	\label{nonred-binomial}
For any $n \in \Nats$, we have
\[
\Entropy{\UniformMeasureLang}(n) \ = \ \sum_{r \leq n} {n \choose r} \cdot r! \cdot \ArityLang(r).
\]
\end{lemma}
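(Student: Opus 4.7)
The plan is to observe that $(\UniformMeasureLang)_n$ is, up to a measure-preserving bijection, the product measure of $|\NonRepeating_{\Lang,[n]}|$ many independent fair coin flips, and then apply the product formula for entropy of independent random variables (Lemma~\ref{joint-entropy-less-than-sum-entropy}).

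More precisely, I would first unpack the definition: the restriction $\Xi|_{[n]}$ is completely determined by the sub-collection $\{\xi_{R(\xx)} \st \<R,\xx\>\in \NonRepeating_{\Lang,[n]}\}$ of i.i.d.\ uniform $\{\top,\bot\}$-valued random variables, because $\Xi|_{[n]}$ assigns $\bot$ to every tuple with a repeated entry by construction. Conversely, the map from $\{\top,\bot\}^{\NonRepeating_{\Lang,[n]}}$ to the set of non-redundant structures in $\Str_\Lang(n)$ sending each assignment to the structure it defines is a bijection. Hence the distribution of $\Xi|_{[n]}$ is the pushforward of the product Bernoulli$(1/2)$ measure under this bijection, and the two measures have the same entropy.

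Next I would carry out the counting step: for each arity $r$, the number of $r$-tuples of distinct elements from $[n]$ is the falling factorial $n(n-1)\cdots (n-r+1)=\binom{n}{r}\,r!$ (which equals $0$ when $r>n$, accounting for the restriction to $r\le n$), and there are $\ArityLang(r)$ relation symbols of arity $r$. Thus
\[
|\NonRepeating_{\Lang,[n]}| \;=\; \sum_{r\le n} \binom{n}{r}\cdot r!\cdot \ArityLang(r).
\]
When this cardinality is finite, the product measure is uniform on $2^{|\NonRepeating_{\Lang,[n]}|}$ atoms and Lemma~\ref{joint-entropy-less-than-sum-entropy} (applied with equality, since the coins are independent) gives the entropy as $|\NonRepeating_{\Lang,[n]}|\cdot h(\mathrm{Bernoulli}(1/2)) = |\NonRepeating_{\Lang,[n]}|$, yielding the claimed formula.

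The only mild subtlety is the case where $\ArityLang(r)=\infty$ for some $r\le n$, so the sum is $\infty$. In that case $\NonRepeating_{\Lang,[n]}$ is countably infinite, the product Bernoulli$(1/2)$ measure is atomless, and hence so is $(\UniformMeasureLang)_n$; by Definition~\ref{entropy-definition-finite-set}, the entropy is $\infty$, matching the right-hand side. This is not really an obstacle, just a case that needs to be stated explicitly to make the identity hold in $\Reals\cup\{\infty\}$.
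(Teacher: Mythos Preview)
Your proposal is correct and follows essentially the same route as the paper's proof: both identify $(\UniformMeasureLang)_n$ with the collection of independent fair coins $\{\xi_{R(\xx)} : \langle R,\xx\rangle \in \NonRepeating_{\Lang,[n]}\}$, invoke the equality case of Lemma~\ref{joint-entropy-less-than-sum-entropy} to get the entropy as $|\NonRepeating_{\Lang,[n]}|$, and count that set via the falling factorial. Your treatment is slightly more careful in explicitly handling the case $\ArityLang(r)=\infty$ for some $r\le n$, which the paper leaves implicit.
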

%%%%%%%% %%%%%%%% %%%%%%%% %%%%%%%%
\begin{proof}
The invariant measure $\UniformMeasureLang$ is non-redundant,  and so for each $n\in\Nats$, the invariant measure
	$(\UniformMeasureLang)_n$ is concentrated on elements of $\Str_\Lang(n)$ in which no relations of arity greater than $n$ hold.
	Hence  $(\UniformMeasureLang)_n$ is determined by the set 
	$\{\xi_{R(\xx)}\st \<R, \xx\> \in \NonRepeating_{\Lang, [n]}\}$ of 
	random variables.
Because these random variables 
	are independent, we have 
\begin{align*}
\Entropy{\UniformMeasureLang}(n) \ &= 
\sum_{(R, \xx) \in \NonRepeating_{\Lang, [n]}}
\EntropyFin{\xi_{R(\xx)}}
\\
&= \ \bigl|\NonRepeating_{\Lang, [n]}\bigr| \\
&= \ \sum_{r \leq n} {n \choose r}
\cdot r! \cdot \ArityLang(r),
\end{align*}
as 
	the number of $r$-tuples from $[n]$ consisting of distinct elements 
	is ${n \choose r} \cdot r!$, 
	and there are $\ArityLang(r)$-many relation symbols of arity $r$ in $L$.
\end{proof}
%%%%%%%% %%%%%%%% %%%%%%%% %%%%%%%%

Observe that when $L$ has only finitely many relation symbols of each arity, 
Lemma~\ref{nonred-binomial} shows that
$\Entropy{\UniformMeasureLang}$ is $\Reals$-valued.
Hence, by varying the choice of such an $L$ (and hence the function $\ArityLang$), 
we can obtain
$\Reals$-valued entropy functions
that grow arbitrarily fast.

We now show that $\UniformMeasureLang$ has the fastest growing entropy function among non-redundant invariant measures on $\Str_\Lang(\Nats)$.

%%%%%%%% %%%%%%%% %%%%%%%% %%%%%%%%
\begin{proposition}
	\label{nonred-bound}
Let $\nu$ be a non-redundant invariant measure on $\Str_\Lang(\Nats)$. Then 
\[
\Entropy{\nu}(n) \ \leq \  \Entropy{\UniformMeasureLang}(n)
\]
for all $n \in \Nats$.
\end{proposition}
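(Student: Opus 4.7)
The plan is to reduce to the elementary fact that on a finite probability space the uniform distribution maximizes entropy. First I would split into cases based on whether $\bigl|\NonRepeating_{\Lang,[n]}\bigr|$ is finite or infinite, which by Lemma~\ref{nonred-binomial} is exactly whether $\Entropy{\UniformMeasureLang}(n)$ is real-valued or equal to $\infty$. In the infinite case, the inequality is immediate since the right-hand side is $\infty$.

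In the finite case, the key observation is that a non-redundant $\Lang$-structure on $[n]$ is determined by the Boolean values of $R(\xx)$ as $\<R,\xx\>$ ranges over $\NonRepeating_{\Lang,[n]}$, and any assignment of such values yields a non-redundant structure. Hence the set of non-redundant structures in $\Str_\Lang(n)$ has cardinality exactly $2^{|\NonRepeating_{\Lang,[n]}|}$. Since $\nu$ is non-redundant, $\nu_n$ concentrates on this finite set, so by the standard bound on entropy of a probability measure on a finite set we obtain
\[
\Entropy{\nu}(n) = \EntropyFin{\nu_n} \ \leq\  \log_2\!\bigl(2^{|\NonRepeating_{\Lang,[n]}|}\bigr) \ =\  \bigl|\NonRepeating_{\Lang,[n]}\bigr|.
\]
On the other hand, from the construction of $\UniformMeasureLang$ via independent fair coin flips indexed by $\NonRepeating_{\Lang,[n]}$, the measure $(\UniformMeasureLang)_n$ is precisely the uniform measure on the $2^{|\NonRepeating_{\Lang,[n]}|}$ non-redundant structures on $[n]$, and therefore achieves the bound, giving $\Entropy{\UniformMeasureLang}(n) = |\NonRepeating_{\Lang,[n]}|$ in agreement with Lemma~\ref{nonred-binomial}. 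Combining these two observations yields the desired inequality.

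There is no genuine obstacle here; the only thing to be a little careful about is that $\nu_n$ really is supported on the non-redundant structures in $\Str_\Lang(n)$. This follows because $\Thnr(\Lang)$ is a $\Pi_1$ theory, so if $\cM \models \Thnr(\Lang)$ then $\cM|_{[n]} \models \Thnr(\Lang)$; hence $\mu(\Extent{\Thnr(\Lang)}_\Nats) = 1$ implies $\mu_n(\Extent{\Thnr(\Lang)}_{[n]}) = 1$, exactly as in the proof of Lemma~\ref{Interdefinitions-preserve-entropy}.
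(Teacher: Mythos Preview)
Your proof is correct, but it takes a different route from the paper's. The paper argues coordinate-wise: it lets $\cN$ have distribution $\nu$, applies subadditivity of entropy (Lemma~\ref{joint-entropy-less-than-sum-entropy}) to bound $\Entropy{\nu}(n)$ by $\sum_{R,\xx} \EntropyFin{R^\cN(\xx)}$, observes that each summand with a repeated-entry tuple vanishes by non-redundance, and bounds each remaining summand by the entropy of a fair coin $\xi_{R(\xx)}$. You instead use a single global bound: identify the non-redundant structures on $[n]$ with $\{\top,\bot\}^{\NonRepeating_{\Lang,[n]}}$, note that $\nu_n$ is supported there, and invoke the fact that the uniform distribution maximizes entropy on a finite set. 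Your approach is slightly more elementary, needing only the maximum-entropy property of the uniform distribution rather than subadditivity; the paper's coordinate decomposition, on the other hand, makes visible exactly where any entropy gap between $\nu$ and $\UniformMeasureLang$ would arise (either dependence among the $R^\cN(\xx)$ or some $R^\cN(\xx)$ being non-uniform). Your handling of the infinite case and of why $\nu_n$ is supported on $\Extent{\Thnr(\Lang)}_{[n]}$ is fine.
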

%%%%%%%% %%%%%%%% %%%%%%%% %%%%%%%%
\begin{proof}
	Suppose $\cN$
	is a random $\Str_\Lang(\Nats)$-structure with distribution $\nu$. For $R \in \Lang$ and $\xx \in \Nats$ with $|\xx| = \arity(R)$, 
	recall that the random instantiation $R^\cN$ satisfies
\[
{R^\cN(\xx)} = \top
\qquad
\text{if and only if}
\qquad
\cN\models R(\xx).
\]

By Lemma~\ref{joint-entropy-less-than-sum-entropy} we then have, for $n \in \Nats$,
\[
\Entropy{\nu}(n) \ = \   \Entropy{\cN}(n) \ \leq  
	\sum_{\substack{R \in \Lang, \ \xx \in [n], \text{ and }\\|\xx| = \arity(R)}} \EntropyFinbig{{R^\cN(\xx)}}.
\]
If $\xx$ has duplicate entries then we know that $\EntropyFinbig{R^\cN(\xx)} = 0$ as $\cN$ is non-redundant a.s. Further, if $\xx$ has no duplicate entries then $\EntropyFinbig{{R^\cN(\xx)}} \leq \EntropyFin{\xi_{R(\xx)}}$ as the distribution of $\xi_{R(\xx)}$ is uniform on $\{\top, \bot\}$, and this is the distribution with maximal entropy on $\{\top, \bot\}$.

Therefore, we have 
	\begin{align*}
	\sum_{\substack{R \in \Lang, \ \xx \in [n], \text{ and }\\|\xx| = \arity(R)}} \EntropyFinbig{{R^\cN(\xx)}}
		\ &\leq \sum_{(R, \xx) \in \NonRepeating_{\Lang, [n]}} \hspace*{-10pt} \EntropyFin{\xi_{R(\xx)}}\\
		\ &=\   \Entropy{\Xi}(n) \\
		\ &=\   \Entropy{\UniformMeasureLang}(n),
	\end{align*}
	and so $\Entropy{\nu}(n) \ \le \ \Entropy{\UniformMeasureLang}(n),$
as desired.
\end{proof}
%%%%%%%% %%%%%%%% %%%%%%%% %%%%%%%%

Putting together the previous lemma and proposition we immediately obtain the following bound.
\begin{corollary}
	\label{strongLemma}
Let $\nu$ be a non-redundant invariant measure on $\Str_\Lang(\Nats)$.
	Then 
	\[
		\Entropy{\nu}(n) \leq
	\sum_{r \leq n} {n \choose r} \cdot r! \cdot \ArityLang(r)\]
for all $n \in \Nats$.
	In particular, when $L$ is finite with maximum arity $k$, we have $\Entropy{\nu}(n) = O(n^k)$.
\end{corollary}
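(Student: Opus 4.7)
The plan is to chain the two preceding results and then do a routine asymptotic estimate for the ``in particular'' clause. First I would invoke Proposition~\ref{nonred-bound}, which already gives
\[
\Entropy{\nu}(n) \leq \Entropy{\UniformMeasureLang}(n)
\]
for every $n\in\Nats$. Then I would substitute the exact formula provided by Lemma~\ref{nonred-binomial}, namely $\Entropy{\UniformMeasureLang}(n) = \sum_{r\le n} \binom{n}{r}\,r!\,\ArityLang(r)$, to obtain the displayed inequality in the statement. These two steps together give the first assertion in a single line.

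For the ``in particular'' assertion, I would assume $L$ is finite of maximum arity $k$. Then $\ArityLang(r) = 0$ for $r > k$ (no relation symbols of that arity), and $\ArityLang(r) \le |L|$ for every $r \le k$. Hence for $n \ge k$ the sum truncates to
\[
\sum_{r \le n} \binom{n}{r}\, r!\, \ArityLang(r) \ =\ \sum_{r=0}^{k} \binom{n}{r}\, r!\, \ArityLang(r).
\]
Each term $\binom{n}{r}\, r! = n(n-1)\cdots(n-r+1)$ is bounded by $n^r \le n^k$, and there are only $k+1$ terms, so the right-hand side is bounded by $(k+1)\,|L|\,n^k$, which is $O(n^k)$. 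Combining with the first part of the corollary yields $\Entropy{\nu}(n) = O(n^k)$.

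There is essentially no obstacle here, since Proposition~\ref{nonred-bound} and Lemma~\ref{nonred-binomial} do all the real work; the only thing to watch is that the bound $\ArityLang(r) \le |L|$ is used uniformly and that one notes the truncation at $r = k$, which is where finiteness and bounded arity both enter.
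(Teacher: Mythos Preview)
Your proposal is correct and matches the paper's approach exactly: the paper simply states that the corollary follows immediately from combining Proposition~\ref{nonred-bound} with Lemma~\ref{nonred-binomial}, and your write-up spells out precisely that chaining together with the routine asymptotic estimate for the $O(n^k)$ clause.
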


Recall from Lemma~\ref{non-redundant-makes-it-real-valued} that when 
$L$ has only finitely many relation symbols of each arity, any non-redundant invariant measure on $\Str_\Lang(\Nats)$
has an $\Reals$-valued entropy function. Corollary~\ref{strongLemma} improves this by providing an explicit upper bound.

In fact, we do not need the invariant measure to be non-redundant, nor the language to be relational, to obtain the final line of Corollary~\ref{strongLemma};
by Proposition~\ref{nonredundant-entropy-function}
we see that the polynomial bound $O(n^k)$ holds for the entropy function of an arbitrary invariant measure for a finite language (possibly with constant or function symbols) of maximum arity $k$, thereby strengthening Lemma~\ref{realvalued-lemma}.

An interesting question for future work is to characterize precisely those functions from $\Nats$ to $\Reals \cup \{\infty\}$ that can be the entropy function of an invariant measure. 
By Proposition~\ref{nonredundant-entropy-function}, it suffices to consider entropy functions of non-redundant invariant measures for relational languages.

%%%%%%%%%%%%%%%%%%%%%%%%%%%%%%%%%%
\section*{Acknowledgements}
%%%%%%%%%%%%%%%%%%%%%%%%%%%%%%%%%%
The authors thank Jan Reimann and Daniel M.\ Roy for helpful conversations, and the anonymous referee for their comments.

%%%%%%%%%%%%%%%%%%%%%%%%%%%%%%%%%%

%% BIBLIOGRAPHY %%
%\bibliographystyle{amsnomr}
%%\bibliographystyle{ws-procs9x6} 
%\bibliography{bibliography}

\providecommand{\bysame}{\leavevmode\hbox to3em{\hrulefill}\thinspace}
\providecommand{\MR}{\relax\ifhmode\unskip\space\fi MR }
% \MRhref is called by the amsart/book/proc definition of \MR.
\providecommand{\MRhref}[2]{%
  \href{http://www.ams.org/mathscinet-getitem?mr=#1}{#2}
}
\providecommand{\href}[2]{#2}

\end{document}